\newtheorem{theorem}{Theorem}[section]
\newtheorem{corollary}[theorem]{Corollary}
\newtheorem{lemma}[theorem]{Lemma}
\theoremstyle{definition}
\newtheorem{definition}[theorem]{Definition}
\newtheorem{example}[theorem]{Example}
\newtheorem{remark}[theorem]{Remark}
\begin{document}
\title{Thermodynamic formalism and large deviation principle of multiplicative Ising models}

\author[Jung-Chao Ban]{Jung-Chao Ban}
\address[Jung-Chao Ban]{Department of Mathematical Sciences, National Chengchi University, Taipei 11605, Taiwan, ROC.}
\address{Math. Division, National Center for Theoretical Science, National Taiwan University, Taipei 10617, Taiwan. ROC.}
\email{jcban@nccu.edu.tw}

\author[Wen-Guei Hu]{Wen-Guei Hu}
\address[Wen-Guei Hu]{College of Mathematics, Sichuan University, Chengdu, 610064, China}
\email{wghu@scu.edu.cn}

\author[Guan-Yu Lai]{Guan-Yu Lai}
\address[Guan-Yu Lai]{Department of Applied Mathematics, National Yang Ming Chiao Tung University, Hsinchu 30010, Taiwan, ROC.}
\email{guanyu.am04g@g2.nctu.edu.tw}

\keywords{Gibbs measures, multiplicative shift, large deviation priciple, multiple sum, free energy function.}

\thanks{Ban is partially supported by the Ministry of Science and Technology, ROC (Contract MOST 109-2115-M-004-002-MY2 and 108-2115-M-004-003). Hu is partially supported by the National Natural Science Foundation of China (Grant 11601355).}

\date{}

\baselineskip=1.2\baselineskip

\begin{abstract}
The aim of this study is tree-fold. First, we investigate the thermodynamics of the Ising models with respect to 2-multiple Hamiltonians. This extends the previous results of [Chazotte and Redig, Electron. J. Probably., 2014] to $\mathbb{N}^d$. Second, we establish the large deviation principle (LDP) of the average $\frac{1}{N} S_N^G$, where $S_N^G$ is a 2-multiple sum along a semigroup generated by k numbers which are k co-primes. This extends the previous results [Ban et al. Indag. Math., 2021] to a board class of the long-range interactions. Finally, the results described above are generalized to the multidimensional lattice $\mathbb{N}^d, d\geq1$. 
\end{abstract}
\maketitle


\section{Introduction}

In this article, we investigate the thermodynamic formalism (e.g., Gibbs measures, entropy and pressure functions) and the large deviation principle of the multiplicative Ising models on $\mathbb{N}^{d}, d\geq 1$. Before presenting the main results, we provide the motives behind this study. Consider the lattice spin systems with Ising $\pm 1$ spins on $\mathbb{N}$. The Hamiltonian of the standard Ising model on each configuration $\sigma \in\{-1,+1\}^{\mathbb{N}}$ is  
\[
H(\sigma )=-\beta \left( \sum_{i\in \mathbb{N}}J\sigma _{i}\sigma
_{i+1}+h\sum_{i\in \mathbb{N}}\sigma _{i}\right) ,
\]
where the parameter $\beta $ is the inverse temperature, $J$ is the coupling strength and $h$ stands for the magnetic field. Note that the Hamiltonian $H(\sigma )$ is a nearest neighbor translation-invariant interaction. The thermodynamics of the Ising models, e.g., the existence of the Gibbs measure, phase transition problem, entropy and pressure functions have been studied in depth. The best reference on this topic is \cite{georgii2011gibbs}. 

Motivated from multiple ergodic theory (we refer the reader to \cite{fan2014some, frantzikinakis2011some} for detailed definitions and recent advances in multiple ergodic theory) and the previous works of Kifer \cite{kifer2010nonconventional}, Kifer and Varadhan \cite{kifer2014nonconventional}, Fan, Liao and Ma \cite{fan2012level} on the nonconventional averages, Carinci et al. \cite{carinci2012nonconventional} and Chazottes and Redig \cite{chazottes2014thermodynamic} investigate the `multiplicative Ising models'. That is, they consider the Hamiltonian on each configuration $\sigma \in \{-1,+1\}^{\mathbb{N}}$ as 
\[
H^{[m]}(\sigma )=-\beta \left( \sum_{i\in \mathbb{N}}J\sigma _{i}\sigma
_{2i}+h\sum_{i\in \mathbb{N}}\sigma _{i}\right) .
\]
It should be noted that the Hamiltonian $H^{[m]}(\sigma )$ is a long-range
non-translation invariant interaction. For $h=0$, the existence and
uniqueness of the Gibbs measure with respect to $H^{[m]}(\sigma )$ is
constructed \cite{chazottes2014thermodynamic}, which is multiplication invariant. 
\begin{theorem}[Theorem 3.2 \cite{chazottes2014thermodynamic}]\label{thm 1.1}
	Let 
	\begin{equation*}
		\mu_{N}(\sigma_{\mathcal{N}_{2N}})=\frac{e^{-H_{N}(\sigma_{\mathcal{N}_{2N}})}}{\sum_{{\sigma_i}=\pm 1, i\in \mathcal{N}_{2N}}e^{-H_{N}(\sigma_{\mathcal{N}_{2N}})} }\footnote{ $\mathcal{N}_{2N}=\{ i\in\mathbb{N} : i\leq 2N \}$  }
	\end{equation*}
	be the finite-volume probability measure corresponding to the Hamiltonian 
	\begin{equation*}
		H_{N}(\sigma_{\mathcal{N}_{2N}})= -\beta\sum_{i=1}^{N}  \sigma_{i} \sigma_{2i}.
	\end{equation*}
	Then
	\item[1.]Unique limit measure : The measure $\mu_{N}$ have a unique weak limit denoted by $\mu_{\infty}$ which is the Gibbs measure.
	\item[2.]Independent Ising layers : Under $\mu_{\infty}$, the $\{\tau_{\ell}^{i}=\sigma_{i2^\ell}\}_{\ell=0}^{\infty} , i\in 2\mathbb{N}_0+1$ are independent and distributed according to the standard Ising model measure $\mu_\infty^{Ising}$ with a free boundary condition on the left.
	\item[3.]Multiplication invariance : The measure $\mu_{\infty}$ is multiplication invariant, i.e., for all $i\in\mathbb{N}$, $\sigma=(\sigma_j)_{j=1}^{\infty}$ and $T_i\sigma=(\sigma_{ij})_{j=1}^{\infty}$ have the same distribution.
	
\end{theorem}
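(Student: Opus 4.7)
The plan is to exploit the structural observation that the Hamiltonian $H_N$ only couples sites of the form $(i,2i)$, which allows a decomposition of $\mathcal{N}_{2N}$ into disjoint \emph{dyadic layers} indexed by odd integers. Specifically, for each odd $k\in 2\mathbb{N}_0+1$ set
\[
\Lambda_k^{(N)} := \{k\cdot 2^\ell : \ell\ge 0\}\cap \mathcal{N}_{2N}.
\]
These sets partition $\mathcal{N}_{2N}$, and every coupling $\sigma_i\sigma_{2i}$ in $H_N$ lies entirely in one layer. Consequently $H_N = \sum_{k\text{ odd}} H_N^{(k)}$, where each $H_N^{(k)}$ is a standard 1D Ising Hamiltonian with coupling $\beta$ on the finite chain $\Lambda_k^{(N)}$. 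The partition function therefore factorises, and $\mu_N$ is a product measure whose $k$-th marginal is the finite-volume 1D Ising Gibbs measure with free boundary on $\Lambda_k^{(N)}$.

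For items (1) and (2) I would fix any odd $k$ and note that $|\Lambda_k^{(N)}|\to\infty$ as $N\to\infty$, so on each layer the finite-volume measure converges weakly to the unique infinite-volume 1D Ising Gibbs measure $\mu_\infty^{Ising}$ on $\mathbb{N}$ with free boundary; this is classical, as 1D Ising has no phase transition. Since any cylinder event depends on only finitely many sites, which for $N$ large lie in finitely many layers, the product structure persists in the limit and $\mu_N$ converges weakly to $\mu_\infty := \bigotimes_{k\in 2\mathbb{N}_0+1}\mu_\infty^{Ising}$. Because the DLR specification of $H^{[m]}$ also factorises over layers, any Gibbs measure must coincide with this product, giving uniqueness and simultaneously the independent-Ising-layer structure in item (2).

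For item (3), write $i = 2^a m$ with $m$ odd and decompose any $j\in\mathbb{N}$ as $j = 2^b r$ with $r$ odd. Then $ij = 2^{a+b}mr$ has odd part $mr$, so $T_i\colon \sigma\mapsto(\sigma_{ij})_{j\ge 1}$ acts on the layer decomposition as the bijection $r\mapsto mr$ on odd integers together with a shift by $a$ along each layer. Two facts complete the argument: the measure $\mu_\infty^{Ising}$ on $\mathbb{N}$ with $h=0$ is a homogeneous Markov chain whose uniform stationary law is preserved under the half-line shift, and the layers under $\mu_\infty$ are i.i.d.\ copies of this measure. Combined with bijectivity of $r\mapsto mr$ on $2\mathbb{N}_0+1$, this shows that $T_i\sigma$ and $\sigma$ have the same distribution under $\mu_\infty$.

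The main obstacle is the convergence step in item (1): although the layer factorisation is immediate at the combinatorial level, one must verify both that the $\mu_N$-mass of cylinder events converges to the $\mu_\infty$-mass, and that the weak limit genuinely satisfies the DLR equations for the long-range non-translation-invariant interaction $H^{[m]}$. After restricting to a finite window and taking $N$ large, both reduce to the corresponding one-dimensional Ising statement on each of the finitely many layers intersecting that window, which is classical.
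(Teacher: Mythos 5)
Your proposal is correct and follows essentially the same route as the cited source \cite{chazottes2014thermodynamic} and as this paper's own $\mathbb{N}^d$ generalizations (Theorems \ref{thm iid 2-multiple} and \ref{thm mu 2-multiple}): decompose $\mathbb{N}$ into dyadic layers indexed by the odd integers, observe that $\mu_N$ factorizes into independent finite-volume free-boundary Ising chains on these layers, pass to the infinite-volume limit layer by layer, and deduce multiplication invariance from the i.i.d.\ stationary layer structure. The only nitpick is that $r\mapsto mr$ is an injection of the odd integers into themselves rather than a bijection (unless $m=1$), but injectivity is all that is needed to relabel i.i.d.\ layers, so the argument stands.
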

Carinci et al. \cite{carinci2012nonconventional} established the large deviation principle of the average $\frac{1}{N}S_{N}(\sigma ):=\frac{1%
}{N}\sum_{i=1}^{N}\sigma _{i}\sigma _{2i}$ by calculating the rigorous
formula of the free energy function 
\[
F_{r}(\beta )=F_{\mathbb{P}_{r}}(\beta
)=\lim_{N\rightarrow \infty }\frac{1}{N}\log \mathbb{E}_{\mathbb{P}
	_{r}}\left(e^{\beta S_{N}}\right),
\]
where $\mathbb{P}_{r}$ is the product of Bernoulli measures with the parameter $r$ on two symbols $\{-1,+1\}$. 
\begin{theorem}[Theorem 4.1 \cite{carinci2012nonconventional}]\label{thm 1.2}
	The explicit expression of the free energy function associated to the multiple sum $S_N$ is  
	\begin{equation*}
		\begin{aligned}
			F_r(\beta)=\log\left( (r(1-r))^{\frac{3}{4}} |v^T\cdot e_+| \Lambda_+\right) +\mathcal{G}(\beta),
		\end{aligned}
	\end{equation*}
	where 	
	\begin{equation*}
		\begin{aligned}
			\mathcal{G}(\beta)=\frac{1}{2}\sum_{\ell=1}^{\infty}\frac{1}{2^{\ell}}\log\left(1+\left( \frac{2\cosh(h)}{|v^T\cdot e_+|^2}-1 \right)\left(\frac{\Lambda_-}{\Lambda_+} \right)^\ell\right)
		\end{aligned}
	\end{equation*}	
	and $\Lambda_{\pm}=e^\beta \left( \cosh(h)\pm \sqrt{\sinh^2(h)+e^{-4\beta}} \right)$, $v^T=(e^{h/2},e^{-h/2})$, $h=\frac{1}{2}\log(r/(1-r))$, $e_+=\frac{w_+}{\| w_+ \|}$ with $w_+=(-e^{-\beta}, e^{h+\beta}-\Lambda_+)^T$.
\end{theorem}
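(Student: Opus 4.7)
The plan is to exploit the \emph{multiplicative chain decomposition}: every $n\in\mathbb{N}$ admits the unique form $n=(2j+1)2^{k}$ with $j,k\geq 0$, so $\mathbb{N}$ is partitioned into the dyadic chains $\mathcal{C}_{j}=\{(2j+1)2^{k}:k\geq 0\}$. The sum $S_{N}=\sum_{i=1}^{N}\sigma_{i}\sigma_{2i}$ then decomposes as a disjoint union of nearest-neighbour interactions along the chains: the bond joining levels $k$ and $k+1$ of $\mathcal{C}_{j}$ contributes to $S_{N}$ iff $(2j+1)2^{k}\leq N$, i.e.\ for $k=0,1,\dots,K_{j}$ with $K_{j}=\lfloor\log_{2}(N/(2j+1))\rfloor$. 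Because $\mathbb{P}_{r}$ is a product measure, spins from distinct chains are independent, so
\[
\mathbb{E}_{\mathbb{P}_{r}}\bigl[e^{\beta S_{N}}\bigr]=\prod_{j\geq 0}\mathbb{E}_{\mathbb{P}_{r}}\!\Bigl[\exp\!\Bigl(\beta\sum_{k=0}^{K_{j}}\tau_{k}^{(j)}\tau_{k+1}^{(j)}\Bigr)\Bigr],\qquad \tau_{k}^{(j)}:=\sigma_{(2j+1)2^{k}},
\]
where chains with $2j+1>N$ contribute the trivial factor $1$.

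The next step is to recognise each factor as a finite-volume $1$D Ising partition function with external field $h=\tfrac{1}{2}\log(r/(1-r))$ and free boundary conditions on both sides, since $\mathbb{P}_{r}(\sigma=\pm 1)=e^{\pm h}/(2\cosh h)$. Introducing the symmetric transfer matrix $T_{\sigma\sigma'}=\exp(\beta\sigma\sigma'+h(\sigma+\sigma')/2)$ and the boundary vector $v=(e^{h/2},e^{-h/2})^{T}$, the chain with $L:=K_{j}+1$ bonds contributes $(2\cosh h)^{-(L+1)}\,v^{T}T^{L}v$. A routine diagonalisation yields the eigenvalues $\Lambda_{\pm}$ stated in the theorem, and using $\|v\|^{2}=2\cosh h$ (hence $|v^{T}e_{+}|^{2}+|v^{T}e_{-}|^{2}=2\cosh h$) gives
\[
v^{T}T^{L}v=\Lambda_{+}^{L}\,|v^{T}e_{+}|^{2}\left[1+\Bigl(\tfrac{2\cosh h}{|v^{T}e_{+}|^{2}}-1\Bigr)\Bigl(\tfrac{\Lambda_{-}}{\Lambda_{+}}\Bigr)^{L}\right].
\]

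I then regroup chains by length: for each $\ell\geq 1$, the count $C_{\ell}(N):=\#\{j:K_{j}=\ell-1\}$ equals the number of odd integers in $(N/2^{\ell},N/2^{\ell-1}]$, so $C_{\ell}(N)/N\to 2^{-(\ell+1)}$. Substituting the transfer-matrix expression into $\log\mathbb{E}_{\mathbb{P}_{r}}[e^{\beta S_{N}}]$, dividing by $N$, and invoking the elementary identities $\sum_{\ell\geq 1}\ell\cdot 2^{-(\ell+1)}=1$ and $\sum_{\ell\geq 1}(\ell+1)\cdot 2^{-(\ell+1)}=\tfrac{3}{2}$ isolates $\log\Lambda_{+}$, $\log|v^{T}e_{+}|$, and a constant $-\tfrac{3}{2}\log(2\cosh h)$; applying $2\cosh h=(r(1-r))^{-1/2}$ converts this last constant into the $(r(1-r))^{3/4}$ prefactor, and the remaining $\ell$-indexed terms collapse to $\mathcal{G}(\beta)$, giving the stated closed form.

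The main technical point is justifying the interchange of $\lim_{N\to\infty}$ with the sum over $\ell$. This is benign: because $|\Lambda_{-}/\Lambda_{+}|<1$ each $\ell$-summand is uniformly bounded in $N$, and the geometric weight $2^{-\ell}$ supplies a summable dominating function, so a dominated-convergence argument carries the limit inside the series. The boundary error $C_{\ell}(N)-N/2^{\ell+1}$ is $O(1)$ for each $\ell$ and only $O(\log_{2}N)$ values of $\ell$ contribute, so the aggregate discrepancy is $o(N)$ and disappears after dividing by $N$. The only place that requires genuine care is the bookkeeping of the transfer-matrix decomposition and the odd-integer counting; everything else is mechanical once the chain factorisation is in place.
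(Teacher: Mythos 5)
Your proposal is correct and follows essentially the same route as the paper's treatment of this result: the decomposition of $\mathbb{N}$ into the multiplicative chains $\mathcal{M}_{2}(i)$, $i$ odd (the paper's $\mathcal{I}_{{\bf p}_1}$ and $\mathcal{M}_{{\bf p}_1}({\bf i})$, cf.\ Lemma \ref{lemma Nd 2.4}), the density count $C_{\ell}(N)/N\to 2^{-(\ell+1)}$ (the $G=\langle 2\rangle$ case of Lemma \ref{lemma Nd decomposition 2.5 multiple}), and the transfer-matrix evaluation $v^{T}T^{L}v$ with the Weierstrass/dominated-convergence interchange of limit and sum, exactly as in the proof of Theorem \ref{thm main 1d 2.5multiple} specialized to $\gamma(G)=2$. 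Your bookkeeping of the constants ($\sum_{\ell}\ell 2^{-(\ell+1)}=1$, $\sum_{\ell}(\ell+1)2^{-(\ell+1)}=\tfrac{3}{2}$, $2\cosh h=(r(1-r))^{-1/2}$) reproduces the stated closed form correctly.
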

Theorem \ref{thm 1.2} has been recently generalized to multidimensional
lattice $\mathbb{N}^{d}, d\geq 1$ \cite{ban2021LDP}.
\begin{theorem}[Theorem 3.2 \cite{ban2021LDP}]\label{thm 1.3}
	For any $d\geq 1$ and ${\bf p}=(p_1,...,p_d)\in \mathbb{N}^d$, the following statements hold true.
	\item[1.]The explicit expression of the free energy function associated to the multiple sum\footnote{${\bf i}\cdot {\bf p}=(i_1p_1,...,i_d p_d)$ and $\mathcal{N}_{N_1\times \cdots \times N_d}=\{ {\bf i}=(i_1,...,i_d)\in\mathbb{N}^d : i_j \leq N_j,\forall 1\leq j \leq d \}$}
	\[
	S_{N_1\times \cdots \times N_d}^{{\bf p}}=\frac{1}{N_1\cdots N_d}\sum_{{\bf i}\in \mathcal{N}_{N_1\times \cdots \times N_d} }\sigma_{\bf i} \sigma_{{\bf i}\cdot {\bf p}}
	\] 
	is
	\begin{equation*}\label{F_r(beta)}
		\begin{aligned}
			F_r(\beta)
			=\frac{2p_1\cdots p_d-1}{2p_1\cdots p_d}\log (r(1-r))+\frac{p_1\cdots p_d-1}{p_1\cdots p_d}\log|v^T\cdot e_+|^2 +\log\Lambda_++\mathcal{G}(\beta),
		\end{aligned}
	\end{equation*}
	where $\Lambda_{\pm}$, $v^T$, $h$, $e_+$ are defined as above and 
	\begin{equation*}
		\begin{aligned}
			\mathcal{G}(\beta)=\sum_{\ell=1}^{\infty}\frac{(p_1\cdots p_d-1)^2}{(p_1\cdots p_d)^{\ell+1}}\log\left(1+\left( \frac{2\cosh(h)}{|v^T\cdot e_+|^2}-1 \right)\left(\frac{\Lambda_-}{\Lambda_+} \right)^\ell\right).
		\end{aligned}
	\end{equation*}
	\item[2.]The function $F_r(\beta)$ is differentiable with respect to $\beta \in \mathbb{R}$.
	\item[3.]The multiple average satisfies a LDP with rate function given by
	\begin{equation*}
		\begin{aligned}
			I_{r}(x)=&\sup_{\beta \in \mathbb{R}}\left( \beta x-F_r(\beta) \right).
		\end{aligned}
	\end{equation*}
	Furthermore, if $(F_r)'(\eta)=y$, then $I_{r}(y)=\eta y- F_r(\eta)$. 
\end{theorem}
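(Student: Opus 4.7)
The plan is to reduce the $d$-dimensional multiplicative sum to a product of independent one-dimensional Ising chains and then to count these chains by length. Decompose $\mathbb{N}^d$ under the injection $\mathbf{k}\mapsto\mathbf{k}\cdot\mathbf{p}$: every $\mathbf{k}\in\mathbb{N}^d$ admits a unique representation $\mathbf{k}=\mathbf{j}\cdot\mathbf{p}^{\ell}$ with $\mathbf{j}$ \emph{primitive}, meaning $\mathbf{j}\neq\mathbf{j}'\cdot\mathbf{p}$ for any $\mathbf{j}'\in\mathbb{N}^d$. For each primitive $\mathbf{j}$, let $L_{\mathbf{j}}$ denote the number of nonnegative integers $k$ with $\mathbf{j}\cdot\mathbf{p}^{k}\in\mathcal{N}_{N_1\times\cdots\times N_d}$; then the orbit through $\mathbf{j}$ contributes exactly $L_{\mathbf{j}}$ interactions $\sigma_{\mathbf{j}\cdot\mathbf{p}^{k}}\sigma_{\mathbf{j}\cdot\mathbf{p}^{k+1}}$, $k=0,\dots,L_{\mathbf{j}}-1$, to the Hamiltonian, and the involved spins form a subset of $\mathbb{N}^d$ disjoint from those of any other orbit. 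Since $\mathbb{P}_r$ is a product measure, this yields
\[
\mathbb{E}_{\mathbb{P}_r}\Bigl[\exp\Bigl(\beta\sum_{\mathbf{i}\in\mathcal{N}_{N_1\times\cdots\times N_d}}\sigma_{\mathbf{i}}\sigma_{\mathbf{i}\cdot\mathbf{p}}\Bigr)\Bigr]=\prod_{\mathbf{j}\text{ prim.}}Z_{L_{\mathbf{j}}},
\]
where $Z_\ell$ is the weighted partition function of a free-boundary Ising chain of $\ell$ bonds.

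To evaluate $Z_\ell$, write $\mathbb{P}_r(\sigma)=\sqrt{r(1-r)}\,e^{h\sigma}$ with $h=\tfrac{1}{2}\log(r/(1-r))$, so that $Z_\ell=(r(1-r))^{(\ell+1)/2}\,v^{T}M^{\ell}v$, where $v^{T}=(e^{h/2},e^{-h/2})$ and $M$ is the symmetric Ising transfer matrix with eigenvalues $\Lambda_{\pm}$. Spectral decomposition together with $|v^{T}e_{+}|^{2}+|v^{T}e_{-}|^{2}=v^{T}v=2\cosh h$ produces
\[
v^{T}M^{\ell}v=\Lambda_{+}^{\ell}\,|v^{T}e_{+}|^{2}\Bigl[1+\Bigl(\tfrac{2\cosh h}{|v^{T}e_{+}|^{2}}-1\Bigr)\bigl(\Lambda_{-}/\Lambda_{+}\bigr)^{\ell}\Bigr].
\]
Setting $P=p_1\cdots p_d$, an inclusion--exclusion argument on the nested boxes $\prod_i[1,N_i/p_i^{\ell-1}]\supset\prod_i[1,N_i/p_i^{\ell}]$ (using that the asymptotic density of primitives in $\mathbb{N}^d$ is $(P-1)/P$) shows that the number of primitive $\mathbf{j}$ with $L_{\mathbf{j}}=\ell$ is asymptotic to $N_1\cdots N_d\cdot(P-1)^{2}/P^{\ell+1}$. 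Taking logs, summing over orbits and dividing by $N_1\cdots N_d$ gives
\[
F_r(\beta)=\sum_{\ell\geq 1}\tfrac{(P-1)^2}{P^{\ell+1}}\Bigl[\tfrac{\ell+1}{2}\log(r(1-r))+\ell\log\Lambda_{+}+\log|v^{T}e_{+}|^{2}\Bigr]+\mathcal{G}(\beta),
\]
and evaluating the prefactor series $\sum_{\ell\geq 1}(\ell+1)P^{-\ell-1}=(2P-1)/P$, $\sum_{\ell\geq 1}\ell P^{-\ell-1}=1$, $\sum_{\ell\geq 1}P^{-\ell-1}=(P-1)/P$ collapses everything into the closed form of (1).

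For (2), both $\Lambda_{\pm}(\beta)$ and $|v^{T}e_{+}(\beta)|^{2}$ are real-analytic on $\mathbb{R}$ (the discriminant $\sinh^{2}h+e^{-4\beta}$ stays strictly positive, keeping the eigenvalues simple), and $|\Lambda_{-}/\Lambda_{+}|<1$ uniformly on compact $\beta$-intervals, so the series $\mathcal{G}(\beta)$ and its termwise $\beta$-derivative converge geometrically; the Weierstrass M-test then yields $F_r\in C^{1}(\mathbb{R})$. For (3), $F_r$ is finite and $C^{1}$ on all of $\mathbb{R}$, hence essentially smooth with empty boundary, so the G\"artner--Ellis theorem delivers the full LDP for $S_{N_1\times\cdots\times N_d}^{\mathbf{p}}$ at speed $N_1\cdots N_d$ with rate function $I_r(x)=\sup_{\beta}(\beta x-F_r(\beta))$; the identity $I_r(y)=\eta y-F_r(\eta)$ when $(F_r)'(\eta)=y$ is the standard Legendre-duality property of a differentiable convex function.

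The main obstacle is establishing the chain-count asymptotic \emph{uniformly in $\ell$}: one needs $\#\{\mathbf{j}\text{ primitive}:L_{\mathbf{j}}=\ell\}=N_1\cdots N_d\,(P-1)^{2}/P^{\ell+1}+o(N_1\cdots N_d)$ with errors summable in $\ell$, so that the limit $N_i\to\infty$ can be interchanged with the infinite $\ell$-sum defining $F_r$. The approach is to truncate at $\ell\sim c\log\min_i N_i$: below this threshold, the integer-point discrepancy in each nested box is of lower order than its volume; above it, the exponential prefactor $P^{-\ell}$ dominates the at-most-linear growth $\log Z_\ell=O(\ell)$, and both contributions are $o(N_1\cdots N_d)$ after summation. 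Dominated convergence then justifies the interchange and completes (1).
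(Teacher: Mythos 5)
Your proposal is correct and follows essentially the same route as the paper: decompose $\mathbb{N}^d$ into multiplicative orbits of primitive points (the paper's Lemma \ref{lemma Nd 2.4}), factor the product measure over independent transfer-matrix chains, count chains of length $\ell$ with asymptotic density $(P-1)^2/P^{\ell+1}$ (Lemma \ref{lemma useful 1}), and justify the interchange of the $N_i\to\infty$ limit with the $\ell$-sum by domination (Lemma \ref{lemma Nd decomposition 2.5 multiple}(3)), before invoking G\"artner--Ellis. The only cosmetic slip is that your displayed prefactor series are stated without the factor $(P-1)^2$ that their quoted values already include.
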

 The aim of this paper is to study the multiplcative Ising models on $%
\mathbb{N}^{d}$ and the objective is three-fold.

\textbf{1}. The thermodynamics of the Ising model with respect to 2-multiple Hamiltonian: We establish the thermodynamics formalsm of the multiplicative Ising models on multidimensional lattice $\mathbb{N}^{d}$ with respect to the Hamiltonian $\sum_{i\in \mathbb{N}}\sigma _{i}\sigma _{2i}$ (call it \emph{2-multiple Hamiltonian}). Theorem \ref{thm mu 2-multiple} extends Theorem \ref{thm 1.1} to $\mathbb{N}^{d}$ and the formula of the Kolmogorov-Sinai (KS) entropy with respect to the limiting measure constructed in Theorem \ref{thm mu 2-multiple} is presented in Theorem \ref{thm KS 2multi Nd}.

\textbf{2}. The large deviation principle of the average $\frac{1}{N}S_{N}^{G}$: Let $k\geq 1$ and $p_{1},\ldots ,p_{k}$ be co-primes, and $G=\langle p_{1},...,p_{k} \rangle=\{1=\ell_{1}<\ell_{2}<\cdots\} $ be a semigroup generated by $p_{1},...,p_{k}$. The rigorous formula for the free energy function and
the large deviation principle with respect to the average $S_{N}^{G}(\sigma):=\sum_{i=1}^{N}\sigma _{\ell_{j(i)}}\sigma _{\ell_{j(i)+1}}$ on $\mathbb{N}$ is given in Theorem \ref{thm main 1d 2.5multiple}. Clearly, $2$-multiple Hamiltonian is a special case of the $S_{N}^{G}$ with $G=\langle 2\rangle$. Therefore, Theorem \ref{thm main 1d 2.5multiple} can be seen as a generalization of Theorem \ref{thm 1.2} to a board class of the long-range non-translation invariant interactions. 

\textbf{3}. The $\mathbb{N}^d$ version of the aforementioned results: The $\mathbb{N}^d$ version of Theorem \ref{thm main 1d 2.5multiple} (or the generalization of Theorem \ref{thm 1.3}) are presented in Theorems \ref{thm general} and \ref{theorem main Nd 2.5multiple}. We remark that Theorem \ref{thm general} provides a general formula for $F_r(\beta)$. Meanwhile, Theorem \ref{theorem main Nd 2.5multiple} presents a explicit formula for $F_r(\beta)$ if the sum of $F_r(\beta)$ is along some specific direction. In this circumstance, we call $F_r(\beta)$ the {\bf directional free energy function} which is defined in Section \ref{section 4.2}. We emphasize that the calculation of the rigorous formula for $F_r(\beta)$ along other directions is extremely difficult since the independent sublattices according to the constrains ${\bf G}$ are quite hard to analysis. The notation ${\bf G}$ represents the semigroup generated by the vectors ${\bf p}_1,...,{\bf p}_k \in \mathbb{N}^d$, the formal definition can be found in Section \ref{section 2}. And the generlizations of Theorems \ref{thm mu 2-multiple} and \ref{thm KS 2multi Nd} are given in Theorems \ref{thm mu 2.5-multiple} and \ref{thm Ks 2.5 Nd} respectively.

We remark that Peres et al. \cite{peres2014dimensions} computed the
Hausdorff and Minkowski dimensions of the set 
\[
X_{\Omega }^{G}=\{(x_k)_{k=1}^{\infty}\in \Sigma _{m}:x|_{iG}\in \Omega \text{ for all }i\text{, }\gcd(i,G)=1\}\text{,}
\]
where $\Omega \subseteq \Sigma _{m}:=\{0,\ldots ,m-1\}^{\mathbb{N}}$ and $x|_{iG}:=(x_{i\ell_{k}})_{k=1}^{\infty }$. The work of \cite{peres2014dimensions} concerns the decomposition of the lattices $\mathbb{N}$ into independent sublattices according to the multiple constraints $G$ and calculates its density among the entire lattice $\mathbb{N}$. The study can be regarded as a study of the thermodynamic formalism of $X_\Omega^{G}$ with the potential function $S_N^{G}$ in $\mathbb{N}^d, d\geq1$. In addition, the challenge of the current problem is to look at the same issue as \cite{peres2014dimensions} in $\mathbb{N}^d, d\geq1$. The results can be summarized in the following table.
\begin{center}
	\begin{tabular}{|l|l|l|}
		\hline
		\multicolumn{3}{|c|}{Thermodynamics and LDP}\\
		\hline
		\multirow{7}{*}{Limit measure and KS entropy}&2-multiple, $d=1$&Theorem \ref{thm 1.1}\\
		\cline{2-3}
		&\multirow{3}{*}{2-multiple, $d\geq1$}&Theorem \ref{thm iid 2-multiple}\\
		\cline{3-3}
		&&Theorem \ref{thm mu 2-multiple}\\
		\cline{3-3}
		&&Theorem \ref{thm KS 2multi Nd}\\
		\cline{2-3}
		&\multirow{3}{*}{$S^{\bf G}_{N_1\times\cdots \times N_d},d\geq 1$}&Theorem \ref{thm iid 2.5-multiple}\\
		\cline{3-3}
		&&Theorem \ref{thm mu 2.5-multiple}\\
		\cline{3-3}
		&&Theorem \ref{thm Ks 2.5 Nd}\\
		\hline
		\multirow{4}{*}{Formula of the free energy function}&2-multiple, $d=1$&Theorem \ref{thm 1.2}\\
		\cline{2-3}
		&2-multiple, $d\geq1$&Theorem \ref{thm 1.3}\\
		\cline{2-3}
		&$S^G_N,d=1$&Theorem \ref{thm main 1d 2.5multiple}\\
		\cline{2-3}
		&$S^{\bf G}_{N_1\times\cdots \times N_d},d\geq 1$&Theorem \ref{thm general} and \ref{theorem main Nd 2.5multiple}\\
		\hline
	\end{tabular}
\end{center}



\section{Preliminaries}\label{section 2}
In this section, we provide necessary materials and results on the decomposition of the multidimensional lattice $\mathbb{N}^d$ into independent sublattices and calculate their densities.

Let ${\bf p}_1,...,{\bf p}_k\in \mathbb{N}^d$ with ${\bf p}_i=(p_{i1},...,p_{id})$ for all $1\leq i \leq k$. Denote by $\mathcal{I}_{{\bf p}_1,...,{\bf p}_k}=\{ (i_1,...,i_d)\in \mathbb{N}^d : p_{j1}\nmid i_1   \mbox{ \rm or }...\mbox{ \rm  or }p_{jd}\nmid i_d \mbox{ for all } 1\leq j \leq k  \}$, $\mathcal{M}_{{\bf p}_1,...,{\bf p}_k}=\{ (p_{11}^{\ell_1}\cdots p_{k1}^{\ell_k},..., p_{1d}^{\ell_1} \cdots p_{kd}^{\ell_k}) : \ell_j \geq 0 \mbox{ for all } 1\leq j \leq k  \}$ and
\[
\mathcal{M}_{{\bf p}_1,...,{\bf p}_k}({\bf i})=\{ (i_1 p_{11}^{\ell_1}\cdots p_{k1}^{\ell_k},..., i_d p_{1d}^{\ell_1} \cdots p_{kd}^{\ell_k}) : \ell_j \geq 0 \mbox{ for all } 1\leq j \leq k  \}
\]
is the lattice $\mathcal{M}_{{\bf p}_1,...,{\bf p}_k}$ that starts at ${\bf i}$. We also need the following definitions.
\begin{definition}\label{definition:2.5}
	For $N_1,...,N_d$ and $M_1,...,M_d \geq 1$, let
	\item[1.]$S^{(j)}=\langle p_{1j},...,p_{kj}\rangle=\{ 1=\ell^{(j)}_1< \ell^{(j)}_2 < \cdots   \}$ for all $1\leq j \leq d$.
	\item[2.]$\mathcal{J}_{N_1\times \cdots \times N_d;M_1,...,M_d}=\{  
	{\bf i}\in \mathcal{N}_{N_1\times \cdots \times N_d} :  i_j \ell^{(j)}_{M_j} \leq N_j < i_j \ell^{(j)}_{M_j+1 }\mbox{ for all } 1\leq j \leq d\}$ be the subset of $\mathcal{N}_{N_1\times \cdots \times N_d}$ which satisfies $i_j \ell^{(j)}_{M_j} \leq N_j < i_j \ell^{(j)}_{M_j+1 }$ for all $1\leq j \leq d$.
	\item[3.]$\mathcal{K}_{N_1\times \cdots \times N_d;M_1,...,M_d}=\mathcal{J}_{N_1\times \cdots \times N_d;M_1,...,M_d}\cap~ \mathcal{I}_{{\bf p}_1,...,{\bf p}_k}$ be the subset of $\mathcal{J}_{N_1\times \cdots \times N_d;M_1,...,M_d}$ which belongs to $\mathcal{I}_{{\bf p}_1,...,{\bf p}_k}$.
\end{definition}
The following lemmas are $\mathbb{N}^d$ version of Lemmas 2.4 and 2.7 in \cite{ban2021entropy}.

\begin{lemma}\label{lemma Nd 2.4}
	For ${\bf p}_1,...,{\bf p}_k \in \mathbb{N}^d$ with $\gcd(p_{is},p_{js})=1$ for all $1\leq i \neq j \leq k$ and $1\leq s \leq d$,
	\begin{equation*}  
		\mathbb{N}^d=\bigsqcup_{{\bf i} \in \mathcal{I}_{{\bf p}_1,...,{\bf p}_k}}\mathcal{M}_{{\bf p}_1,...,{\bf p}_k}({\bf i}).
	\end{equation*}  
\end{lemma}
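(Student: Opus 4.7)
The plan is to argue coordinatewise via the $p$-adic valuation. The key ingredient is the hypothesis that $p_{1s},\ldots,p_{ks}$ are pairwise coprime within each fixed coordinate $s$: this ensures that the valuation $v_{p_{js}}$ applied in the $s$-th slot detects only the contribution of ${\bf p}_j$ and nothing from the other generators, which is what allows one to read off a unique tuple of exponents.

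Given ${\bf n}=(n_1,\ldots,n_d)\in\mathbb{N}^d$, I would set $\ell_j:=\min_{1\leq s\leq d}v_{p_{js}}(n_s)$ for each $1\leq j\leq k$ and then define $i_s:=n_s/\prod_{j=1}^{k}p_{js}^{\ell_j}$. Coordinatewise coprimality ensures that each $p_{js}^{\ell_j}$ divides $n_s$, hence so does the full product, so $i_s\in\mathbb{N}$. For each $j$, picking an index $s^{*}$ realizing the minimum gives $v_{p_{js^{*}}}(i_{s^{*}})=0$, so $p_{js^{*}}\nmid i_{s^{*}}$; therefore ${\bf i}=(i_1,\ldots,i_d)\in\mathcal{I}_{{\bf p}_1,\ldots,{\bf p}_k}$, and ${\bf n}\in\mathcal{M}_{{\bf p}_1,\ldots,{\bf p}_k}({\bf i})$ by construction. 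This proves that the union covers all of $\mathbb{N}^{d}$.

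For disjointness, suppose also ${\bf n}\in\mathcal{M}_{{\bf p}_1,\ldots,{\bf p}_k}({\bf i}')$ via exponents $\ell'_1,\ldots,\ell'_k\geq 0$ with ${\bf i}'\in\mathcal{I}_{{\bf p}_1,\ldots,{\bf p}_k}$. Applying $v_{p_{js}}$ to $n_s=i'_s\prod_{j'}p_{j's}^{\ell'_{j'}}$ and using coprimality inside coordinate $s$ yields $v_{p_{js}}(n_s)=v_{p_{js}}(i'_s)+\ell'_j$, so $\ell'_j\leq v_{p_{js}}(n_s)$ for every $s$. Since ${\bf i}'\in\mathcal{I}_{{\bf p}_1,\ldots,{\bf p}_k}$, for each $j$ there is an $s^{*}$ with $v_{p_{js^{*}}}(i'_{s^{*}})=0$, and consequently $\ell'_j=v_{p_{js^{*}}}(n_{s^{*}})$. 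Combining the two bounds gives $\ell'_j=\min_{s}v_{p_{js}}(n_s)=\ell_j$, and then also ${\bf i}'={\bf i}$.

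The skeleton mirrors the one-dimensional Lemma 2.4 of \cite{ban2021entropy}; the one mildly new feature in $\mathbb{N}^{d}$ is that a single scalar exponent $\ell_j$ has to simultaneously track the action of ${\bf p}_j$ in all $d$ coordinates. Taking a minimum of valuations across coordinates is the natural remedy, and the coprimality hypothesis is precisely what prevents different generators from interfering with that minimum; without it the decomposition step $v_{p_{js}}(n_s)=v_{p_{js}}(i_s)+\ell_j$ would fail.
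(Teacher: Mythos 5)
Your proof is correct and follows essentially the same route as the paper: both arguments decompose each coordinate of ${\bf n}$, take the minimum exponent $\ell_j$ over the $d$ coordinates to land the base point in $\mathcal{I}_{{\bf p}_1,\ldots,{\bf p}_k}$, and use the coordinatewise coprimality of the $p_{js}$ to force uniqueness (the paper phrases uniqueness as a contradiction yielding $p_{is}\mid i'_s$ for all $s$, while you pin the exponents down directly as $\min_s v_{p_{js}}(n_s)$, which is the same mechanism with tidier bookkeeping). The only point worth a passing remark is that your valuations $v_{p_{js}}$ are taken with respect to possibly composite bases, but the two identities you need, $v_m(am^{\ell})=v_m(a)+\ell$ and $v_m(ac)=v_m(a)$ for $\gcd(m,c)=1$, do hold in that generality.
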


\begin{proof}
	We first claim that for all ${\bf i} \neq {\bf i}' \in \mathcal{I}_{{\bf p}_1,...,{\bf p}_k}$, $\mathcal{M}_{{\bf p}_1,...,{\bf p}_k}({\bf i})\cap\mathcal{M}_{{\bf p}_1,...,{\bf p}_k}({\bf i}')=\emptyset$. If we suppose not, then there exist ${\bf i} \neq {\bf i}' \in \mathcal{I}_{{\bf p}_1,...,{\bf p}_k}$ such that $\mathcal{M}_{{\bf p}_1,...,{\bf p}_k}({\bf i})\cap\mathcal{M}_{{\bf p}_1,...,{\bf p}_k}({\bf i}')\neq\emptyset$. Since ${\bf i} \neq {\bf i}'$, 
	\[
	(i_1 p_{11}^{\ell_1}\cdots p_{k1}^{\ell_k},..., i_d p_{1d}^{\ell_1} \cdots p_{kd}^{\ell_k})=(i'_1 p_{11}^{\ell'_1}\cdots p_{k1}^{\ell'_k},..., i'_d p_{1d}^{\ell'_1} \cdots p_{kd}^{\ell'_k})
	\]
	for some $\ell_i\neq \ell'_i, 1\leq i \leq k$. Without loss of generality, we may assume $\ell_i>\ell'_i$. Then by the $i_sp_{is}^{\ell_i-\ell'_i}\prod_{j\neq i} p_{js}^{\ell_j} = i'_s \prod_{j\neq i} p_{js}^{\ell'_j}$ for all $1\leq s \leq d$ and $\gcd(p_{is},p_{js})=1$ for all $1\leq i \neq j \leq k$ and $1\leq s \leq d$, we have $p_{is} | i'_s$ for all $1\leq s \leq d$. This contradicts with ${\bf i}'  \in \mathcal{I}_{{\bf p}_1,...,{\bf p}_k}$.
	
	It remains to show that the equality holds. For ${\bf i} \in \mathbb{N}^d$, there exist $\ell_{js}\geq 0$ for all $1\leq s \leq d$ and $1\leq j \leq k$ such that
	\[
	(i_1,...,i_d)=\left(i'_1 \prod_{j=1}^k p_{j1}^{\ell_{j1}},...,i'_d\prod_{j=1}^k p_{jd}^{\ell_{jd}}\right),
	\]
	where $p_{js}\nmid i'_s$ for all $1\leq j \leq k$ and for all $1\leq s \leq d$. Take $\ell_j=\min \{ \ell_{js} : 1\leq s \leq d \}$ for all $1\leq j \leq k$. Then, we have $(i_1,...,i_d) \in \mathcal{M}_{{\bf p}_1,...,{\bf p}_k}(\frac{i_1}{\prod_{j=1}^k p_{j1}^{\ell_{j}}},...,\frac{i_d}{\prod_{j=1}^k p_{jd}^{\ell_{j}}})$ and $(\frac{i_1}{\prod_{j=1}^k p_{j1}^{\ell_{j}}},...,\frac{i_d}{\prod_{j=1}^k p_{jd}^{\ell_{j}}})\in \mathcal{I}_{{\bf p}_1,...,{\bf p}_k}$. The converse is then clear.
\end{proof}

\begin{lemma}\label{lemma Nd decomposition 2.5 multiple}
	For $M_i,N_i \geq 1$ for all $1\leq i \leq d$, we have the following assertions.
	\item[1.]$\displaystyle|\mathcal{J}_{N_1\times \cdots \times N_d;M_1,...,M_d}|=\prod_{i=1}^d\left(\left\lfloor\frac{N_i}{\ell^{(i)}_{M_i}}\right\rfloor-\left\lfloor\frac{N_i}{\ell^{(i)}_{M_i+1}}\right\rfloor\right)$.
	\item[2.]$\displaystyle{\lim_{N_1,...,N_d\to \infty}}\frac{|\mathcal{K}_{N_1\times \cdots \times N_d;M_1,...,M_d}|}{|\mathcal{J}_{N_1\times \cdots \times N_d;M_1,...,M_d} |}=\prod_{i=1}^k\left(1-\frac{1}{p_{i1}p_{i2}\cdots p_{id}}\right)$.
	\item[3.]$\displaystyle{\lim_{N_1,...,N_d\to \infty}}\frac{1}{N_1\cdots N_d}\sum_{M_1=1}^{N_1}\cdots\sum_{M_d=1}^{N_d}|\mathcal{K}_{N_1\times \cdots \times N_d;M_1,...,M_d}|\log a_{M_1,...,M_d}=\\
	\sum_{M_1,...,M_d=1}^{\infty}\displaystyle{\lim_{N_1,...,N_d\to \infty}}\frac{1}{N_1\cdots N_d}|\mathcal{K}_{N_1\times \cdots \times N_d;M_1,...,M_d}|\log a_{M_1,...,M_d}$.
\end{lemma}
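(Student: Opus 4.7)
The plan is to handle the three parts in order, working by analogy with the one-dimensional Lemmas 2.4 and 2.7 of \cite{ban2021entropy} while systematically exploiting the pairwise coprimality $\gcd(p_{is},p_{js})=1$ (for $i\ne j$, $1\le s\le d$) to reduce every multidimensional count to a product of one-dimensional counts.

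Part (1) is pure counting. The defining constraint of $\mathcal{J}_{N_1\times\cdots\times N_d;M_1,\ldots,M_d}$ factors coordinatewise: ${\bf i}\in\mathcal{J}$ iff $i_j\ell^{(j)}_{M_j}\le N_j< i_j\ell^{(j)}_{M_j+1}$ holds for each $j$ independently. Rewriting this one-dimensional condition as $\lfloor N_j/\ell^{(j)}_{M_j+1}\rfloor< i_j\le\lfloor N_j/\ell^{(j)}_{M_j}\rfloor$ and multiplying across $j$ gives the formula immediately.

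Part (2) reduces to an asymptotic density computation inside a growing box. I plan to apply inclusion--exclusion to $\mathcal{I}^c_{{\bf p}_1,\ldots,{\bf p}_k}=\bigcup_j A_j$ with $A_j=\{{\bf i}\in\mathbb{N}^d:p_{js}\mid i_s \text{ for all } s\}$; the coprimality hypothesis ensures, via the Chinese Remainder Theorem, that the density of any intersection $A_{j_1}\cap\cdots\cap A_{j_r}$ in $\mathbb{N}^d$ is the product $\prod_{t=1}^r 1/(p_{j_t1}\cdots p_{j_td})$. Inclusion--exclusion then telescopes to the claimed product $\prod_{j=1}^k(1-1/(p_{j1}\cdots p_{jd}))$. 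Because $\mathcal{J}$ is by part (1) a product of intervals each of length $\sim N_j(1/\ell^{(j)}_{M_j}-1/\ell^{(j)}_{M_j+1})\to\infty$ with $M_1,\ldots,M_d$ fixed, the density of $\mathcal{I}_{{\bf p}_1,\ldots,{\bf p}_k}$ inside the box $\mathcal{J}$ agrees with its density in $\mathbb{N}^d$ up to a boundary error of order $O(1/\min_j N_j)$, and the ratio $|\mathcal{K}|/|\mathcal{J}|$ converges as claimed.

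Part (3) is the heart of the lemma, and is the step I expect to require the most care: it is an interchange of the limit $N_1,\ldots,N_d\to\infty$ with the sum over $M_1,\ldots,M_d$. The intended strategy is dominated convergence on $\mathbb{N}^d$ with counting measure. Pointwise in $(M_1,\ldots,M_d)$, parts (1)--(2) yield $|\mathcal{K}|/(N_1\cdots N_d)\to c_{M_1,\ldots,M_d}:=\prod_i(1/\ell^{(i)}_{M_i}-1/\ell^{(i)}_{M_i+1})\prod_j(1-1/(p_{j1}\cdots p_{jd}))$, and a telescoping computation gives $\sum_{M_1,\ldots,M_d}c_{M_1,\ldots,M_d}=\prod_j(1-1/(p_{j1}\cdots p_{jd}))$, so the target sum is absolutely convergent whenever $(\log a_{M_1,\ldots,M_d})$ is uniformly bounded on the support of $c$. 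The elementary bound $\lfloor N_j/\ell^{(j)}_{M_j}\rfloor-\lfloor N_j/\ell^{(j)}_{M_j+1}\rfloor\le N_j(1/\ell^{(j)}_{M_j}-1/\ell^{(j)}_{M_j+1})+1$ provides a uniform-in-$N$ majorant for each fixed $(M_1,\ldots,M_d)$. The main obstacle is converting this per-term bound into a single summable majorant: the $+1$ boundary corrections produce mixed cross terms that, if naively summed, contribute a divergent tail. The cleanest way around this is to split the sum at a slowly growing cutoff $M_i\le\log N_i$ and control the two pieces separately --- the finite part by straightforward termwise convergence, and the tail by the geometric decay of $\prod_i 1/\ell^{(i)}_{M_i}$ coming from the factor structure of $\mathcal{J}$. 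Once this domination is in place, the exchange of limit and sum is immediate.
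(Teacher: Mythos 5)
Your proposal follows the paper's proof essentially step for step: coordinatewise counting for part (1), inclusion--exclusion combined with the coprimality hypothesis for part (2), and a domination argument to justify interchanging the limit with the sum over $M_1,\ldots,M_d$ in part (3), under the growth hypothesis on $a_{M_1,\ldots,M_d}$ (the paper implicitly assumes $a_{M_1,\ldots,M_d}\le C^{M_1\cdots M_d}$, which your sketch should state rather than the overly restrictive ``$\log a$ uniformly bounded''). The only substantive difference is in part (3), where you correctly flag the $+1$ floor corrections that the paper's own Weierstrass majorant $\prod_{i=1}^d\bigl(1/\ell^{(i)}_{M_i}-1/\ell^{(i)}_{M_i+1}\bigr)\log a_{M_1,\ldots,M_d}$ silently ignores; your cutoff-plus-tail argument (or, more simply, the cruder majorant $\log a_{M_1,\ldots,M_d}\prod_{i=1}^d 1/\ell^{(i)}_{M_i}$, which is summable because $\sum_{M}M/\ell^{(i)}_{M}<\infty$) repairs that minor gap, so your route is if anything more careful than the paper's at the one delicate point.
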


\begin{proof}
	\item[1.]Since ${\bf i}\in \mathcal{J}_{N_1\times \cdots \times N_d;M_1,...,M_d}$ if and only if $i_j \ell^{(j)}_{M_j} \leq N_j < i_j \ell^{(j)}_{M_j+1 }$ for all $1\leq j \leq d$. It follows that $ \frac{N_j}{\ell^{(j)}_{M_j+1 }} < i_j \leq \frac{N_j}{\ell^{(j)}_{M_j}} $ for all $1\leq j \leq d$. Therefore
	\[
	|\mathcal{J}_{N_1\times \cdots \times N_d;M_1,...,M_d}|=\prod_{i=1}^d\left(\left\lfloor\frac{N_i}{\ell^{(i)}_{M_i}}\right\rfloor-\left\lfloor\frac{N_i}{\ell^{(i)}_{M_i+1}}\right\rfloor\right).
	\]
	\item[2.]Let the complement of $\mathcal{I}_{{\bf p}_1,...,{\bf p}_k}$ be 
	\[
	\mathcal{I}_{{\bf p}_1,...,{\bf p}_k}^c=\bigcup_{j=1}^k \mathcal{S}_j=\bigcup_{j=1}^k \{ {\bf i} : p_{js} \mid i_s \mbox{ for all } 1\leq s \leq d \}.
	\]
	Since $\gcd(p_{is},p_{js})=1$ for all $1\leq i \neq j \leq k$ and $1\leq s \leq d$, we have for any $1\leq \ell \leq k$,
	\[
	\bigcap_{w=1}^{\ell} \mathcal{S}_{j_w}=\left\{ {\bf i} : \prod_{w=1}^{\ell}p_{j_w s} \mid i_s \mbox{ for all }  1\leq s \leq d \right\}.
	\]
	Then the inclusion–exclusion principle infers that
	\begin{equation*}
		\begin{aligned}
			\lim_{N_1,...,N_d\to \infty}\frac{|\mathcal{K}_{N_1\times \cdots \times N_d;M_1,...,M_d}|}{|\mathcal{J}_{N_1\times \cdots \times N_d;M_1,...,M_d} |}&=\lim_{N_1,...,N_d\to \infty}\frac{|\mathcal{J}_{N_1\times \cdots \times N_d;M_1,...,M_d}\cap \mathcal{I}_{{\bf p}_1,...,{\bf p}_k}|}{|\mathcal{J}_{N_1\times \cdots \times N_d;M_1,...,M_d} |}\\
			&=\lim_{N_1,...,N_d\to \infty}1-\frac{|\mathcal{J}_{N_1\times \cdots \times N_d;M_1,...,M_d}\cap \mathcal{I}^c_{{\bf p}_1,...,{\bf p}_k}|}{|\mathcal{J}_{N_1\times \cdots \times N_d;M_1,...,M_d} |}\\
			&=\lim_{N_1,...,N_d\to \infty}1-\left( \sum_{n=1}^{k}\frac{|  \mathcal{J}_{N_1\times \cdots \times N_d;M_1,...,M_d}\cap \mathcal{S}_{n}  |}{|\mathcal{J}_{N_1\times \cdots \times N_d;M_1,...,M_d}|}  \right.\\
			& -\sum_{1\leq n_1\neq n_2 \leq k}\frac{|  \mathcal{J}_{N_1\times \cdots \times N_d;M_1,...,M_d}\cap \mathcal{S}_{n_1}\cap \mathcal{S}_{n_2}  |}{|\mathcal{J}_{N_1\times \cdots \times N_d;M_1,...,M_d}|} \\
			&+\sum_{1\leq n_1\neq n_2 \neq n_3 \leq k} \frac{|  \mathcal{J}_{N_1\times \cdots \times N_d;M_1,...,M_d}\cap \mathcal{S}_{n_1}\cap \mathcal{S}_{n_2}\cap \mathcal{S}_{n_3}  |}{|\mathcal{J}_{N_1\times \cdots \times N_d;M_1,...,M_d}|} \\
			&\left.-\cdots+(-1)^{k-1} \frac{|  \mathcal{J}_{N_1\times \cdots \times N_d;M_1,...,M_d}\cap \mathcal{S}_{1}\cap \cdots \cap \mathcal{S}_{k}  |}{|\mathcal{J}_{N_1\times \cdots \times N_d;M_1,...,M_d}|} \right)\\
			&=\prod_{i=1}^k\left(1-\frac{1}{p_{i1}p_{i2}\cdots p_{id}}\right).
		\end{aligned}
	\end{equation*}
	\item[3.]Define
	\begin{equation*}
		\hat{K}_{N_1\times \cdots \times N_d;M_1,...,M_d}=
		\left\{
		\begin{array}{ll}
			|\mathcal{K}_{N_1\times \cdots \times N_d;M_1,...,M_d}| &, \mbox{ if } M_j \leq N_j \mbox{ for all } 1\leq j \leq d,\\
			0 &, \mbox{ otherwise.}
		\end{array}
		\right.
	\end{equation*}
	Then
	\begin{equation*}
		\begin{aligned}  
			&\lim_{N_1,...,N_d\to \infty}\frac{1}{N_1\cdots N_d}\sum_{M_1=1}^{N_1}\cdots \sum_{M_d=1}^{N_d} |\mathcal{K}_{N_1\times \cdots \times N_d;M_1,...,M_d} | \log a_{M_1,...,M_d}\\
			&=\lim_{N_1,...,N_d\to \infty}\frac{1}{N_1\cdots N_d}\sum_{M_1,...,M_d=1}^{\infty}\hat{K}_{N_1\times \cdots \times N_d;M_1,...,M_d}\log a_{M_1,...,M_d}.
		\end{aligned}
	\end{equation*}
	We claim that $\sum_{M_1,...,M_d=1}^{\infty} \frac{\hat{K}_{N_1\times \cdots \times N_d;M_1,...,M_d}\log a_{M_1,...,M_d}}{N_1\cdots N_d}$ converges uniformly in $N_1,...,N_d$ by Weierstrass M-test with
	\begin{equation*}
		\begin{aligned}
			\left|\frac{\hat{K}_{N_1\times \cdots \times N_d;M_1,...,M_d}\log a_{M_1,...,M_d}}{N_1\cdots N_d}  \right|&\leq \left| \frac{|\mathcal{J}_{N_1\times \cdots \times N_d;M_1,...,M_d}|\log a_{M_1,...,M_d}}{N_1\cdots N_d}  \right|\\
			&\leq \prod_{i=1}^d\left(\frac{1}{\ell^{(i)}_{M_i}}-\frac{1}{\ell^{(i)}_{M_i+1}}\right)\log a_{M_1,...,M_d}
		\end{aligned}
	\end{equation*}
	for all $N_1,...,N_d\geq 1$ and 
	\begin{equation*}
		\begin{aligned}
			&\sum_{M_1,...,M_d=1}^{\infty}\prod_{i=1}^d\left(\frac{1}{\ell^{(i)}_{M_i}}-\frac{1}{\ell^{(i)}_{M_i+1}}\right)\log a_{M_1,...,M_d} \\
			&\leq \sum_{M_1,...,M_d=1}^{\infty}\prod_{i=1}^d\left(\frac{1}{\ell^{(i)}_{M_i}}-\frac{1}{\ell^{(i)}_{M_i+1}}\right)\log C^{M_1\cdots M_d}<\infty, 
		\end{aligned}	
	\end{equation*}
	whenever $a_{M_1,...,M_d}\leq C^{M_1\cdots M_d}$ for all $M_1,...,M_d\geq 1$. 
	
	Thus,
	\begin{equation*}
		\begin{aligned}  
			&\lim_{N_1,...,N_d\to \infty}\frac{1}{N_1\cdots N_d}\sum_{M_1=1}^{N_1}\cdots \sum_{M_d=1}^{N_d} |\mathcal{K}_{N_1\times \cdots \times N_d;M_1,...,M_d} | \log a_{M_1,...,M_d}\\
			&=\lim_{N_1,...,N_d\to \infty}\frac{1}{N_1\cdots N_d}\sum_{M_1,...,M_d=1}^{\infty}\hat{K}_{N_1\times \cdots \times N_d;M_1,...,M_d}\log a_{M_1,...,M_d}\\
			&=\sum_{M_1,...,M_d=1}^{\infty}\lim_{N_1,...,N_d\to \infty}\frac{1}{N_1\cdots N_d}\hat{K}_{N_1\times \cdots \times N_d;M_1,...,M_d}\log a_{M_1,...,M_d}\\
			&=\sum_{M_1,...,M_d=1}^{\infty}\lim_{N_1,...,N_d\to \infty}\frac{1}{N_1\cdots N_d}| \mathcal{K}_{N_1\times \cdots \times N_d;M_1,...,M_d}|\log a_{M_1,...,M_d}.
		\end{aligned}
	\end{equation*}
	The proof is complete.
\end{proof}

Before state the main results, we need more definitions for semigroup ${\bf G}$ which is generated by vectors ${\bf p}_1,...,{\bf p}_k\in \mathbb{N}^d$. For $d,k\geq 1$, ${\bf p}_i=(p_{i1}, ..., p_{id}) \in \mathbb{N}^d, 1\leq i \leq k  $ and for each $1 \leq j \leq d$, $\gcd (p_{ij},p_{i'j})=1$ for all $1\leq i \neq i' \leq k$. Let ${\bf G}=\langle{\bf p}_1,..., {\bf p}_k\rangle=\{ {\bf 1}= l^{(j)}_1 \prec^{(j)} l^{(j)}_2 \prec^{(j)} \cdots \}$ be the semigroup generated by ${\bf p}_1, ..., {\bf p}_k$ with an order $\prec^{(j)}$ such that ${\bf G}$ follows $S^{(j)}$ for each $1 \leq j \leq d$. In particular, when $k=1$, ${\bf G}=\langle{\bf p}_1\rangle=\{ l_n={\bf p}_1^{n-1} \}_{n=1}^{\infty}$\footnote{${\bf p}^{n-1}_1=(p_{11}^{n-1},..., p_{1d}^{n-1})$}.

\section{The infinite-volume limit $\mu_{\infty}$ on $\mathbb{N}^d$ and Kolmogorov-Sinai entropy}
In this section, we prove the $\mathbb{N}^d$ version of Theorem 3.1 in \cite{chazottes2014thermodynamic} for layer stationarity and multiplication invariance (Theorems \ref{thm iid 2-multiple} and \ref{thm iid 2.5-multiple}), and then obtain the limit measures (Theorems \ref{thm mu 2-multiple} and \ref{thm mu 2.5-multiple}). We also give the $\mathbb{N}^d$ version of Lemma 3.4 in \cite{chazottes2014thermodynamic} (Lemmas \ref{lemma useful 1} and \ref{lemma useful 2}) to obtain the Kolmogorov-Sinai entropies (Theorems \ref{thm KS 2multi Nd} and \ref{thm Ks 2.5 Nd}). 
\subsection{Existence and invariance of the limit measure on $\mathbb{N}^d$}

\subsubsection{2-multiple Hamiltonian}
Let ${\bf p}_1=(p_1,...,p_d)\in \mathbb{N}^d$ and $\tau^{\bf i}=\{\tau_{\ell}^{\bf i}=\sigma_{{\bf p}^{\ell}_1\cdot {\bf i}}\}_{\ell=0}^{\infty}$ for each ${\bf i}\in \mathcal{I}_{{\bf p}_1}$. Then we have following result which is the $\mathbb{N}^d$ version of Theorem 3.1 in \cite{chazottes2014thermodynamic}.

\begin{theorem}\label{thm iid 2-multiple}
	Suppose that the $\{\tau^{\bf i} : {\bf i}\in \mathcal{I}_{{\bf p}_1} \}$ form an i.i.d. stationary processes, then the distribution of the corresponding $\sigma$ is multiplication invariant.
\end{theorem}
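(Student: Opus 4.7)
The plan is to reduce multiplication invariance to properties of the layer processes $\tau^{\bf i}$ by exploiting the disjoint decomposition $\mathbb{N}^d=\bigsqcup_{{\bf i}\in\mathcal{I}_{{\bf p}_1}}\mathcal{M}_{{\bf p}_1}({\bf i})$ provided by Lemma~\ref{lemma Nd 2.4}. Fix any ${\bf j}\in\mathbb{N}^d$. Applying Lemma~\ref{lemma Nd 2.4} to ${\bf j}$ yields a unique decomposition ${\bf j}={\bf p}_1^{m}\cdot{\bf j}_0$ with $m\geq 0$ and ${\bf j}_0\in\mathcal{I}_{{\bf p}_1}$; applying it to each product ${\bf j}_0\cdot{\bf i}$ produces unique $e({\bf i})\geq 0$ and $\pi({\bf i})\in\mathcal{I}_{{\bf p}_1}$ such that ${\bf j}_0\cdot{\bf i}={\bf p}_1^{e({\bf i})}\cdot\pi({\bf i})$. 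The first computational step is then, for every ${\bf i}\in\mathcal{I}_{{\bf p}_1}$ and $\ell\geq 0$, to evaluate the layer processes of the shifted configuration: $\tilde\tau^{\bf i}_{\ell}:=(T_{\bf j}\sigma)_{{\bf p}_1^{\ell}\cdot{\bf i}}=\sigma_{{\bf p}_1^{\ell+m+e({\bf i})}\cdot\pi({\bf i})}=\tau^{\pi({\bf i})}_{\ell+m+e({\bf i})}$. This exhibits each $\tilde\tau^{\bf i}$ as a deterministic time-translate of the original layer process indexed by $\pi({\bf i})$.

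The key sub-claim will be that $\pi:\mathcal{I}_{{\bf p}_1}\to\mathcal{I}_{{\bf p}_1}$ is an injection. If $\pi({\bf i})=\pi({\bf i}')$ with, say, $e({\bf i})\leq e({\bf i}')$, then ${\bf j}_0\cdot{\bf i}'={\bf p}_1^{e({\bf i}')-e({\bf i})}\cdot{\bf j}_0\cdot{\bf i}$, hence ${\bf i}'={\bf p}_1^{e({\bf i}')-e({\bf i})}\cdot{\bf i}$. Thus ${\bf i}'\in\mathcal{M}_{{\bf p}_1}({\bf i})\cap\mathcal{M}_{{\bf p}_1}({\bf i}')$, and the disjointness asserted in Lemma~\ref{lemma Nd 2.4} forces ${\bf i}={\bf i}'$.

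Once injectivity is in hand, the conclusion will follow from the hypothesis that $\{\tau^{\bf i}\}_{{\bf i}\in\mathcal{I}_{{\bf p}_1}}$ is an i.i.d.\ family of stationary processes. The subfamily $\{\tau^{\pi({\bf i})}\}_{{\bf i}\in\mathcal{I}_{{\bf p}_1}}$, being reindexed from the original family by an injection, is again i.i.d.\ with the same one-dimensional marginal. Applying the deterministic time-shift by $m+e({\bf i})$ to the ${\bf i}$-th factor preserves that marginal by stationarity, and it preserves joint independence because the shifts act factor-wise. Therefore the joint law of $\{\tilde\tau^{\bf i}\}_{{\bf i}\in\mathcal{I}_{{\bf p}_1}}$ coincides with that of $\{\tau^{\bf i}\}_{{\bf i}\in\mathcal{I}_{{\bf p}_1}}$, and since both $\sigma$ and $T_{\bf j}\sigma$ are reconstructed from their layer processes via the partition of $\mathbb{N}^d$, they have the same distribution.

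I expect the subtle point to lie in the bookkeeping of the decomposition ${\bf j}_0\cdot{\bf i}={\bf p}_1^{e({\bf i})}\cdot\pi({\bf i})$: because ${\bf j}_0\cdot{\bf i}$ generally fails to belong to $\mathcal{I}_{{\bf p}_1}$, the ``fudge factor'' $e({\bf i})$ is nontrivially ${\bf i}$-dependent and must be absorbed into a time-shift on the correct layer. It is important to notice that merely \emph{injectivity} of $\pi$ (rather than bijectivity, which may genuinely fail) is what one needs, since a subfamily of an i.i.d.\ family remains i.i.d.\ with the same marginal. Beyond this observation the argument is essentially a careful transcription of the $d=1$ proof of Theorem~\ref{thm 1.1} from \cite{chazottes2014thermodynamic} using Lemma~\ref{lemma Nd 2.4}.
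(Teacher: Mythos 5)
Your proposal is correct and follows essentially the same route as the paper's proof: decompose indices via Lemma~\ref{lemma Nd 2.4}, observe that multiplication by the fixed base point induces an injection of $\mathcal{I}_{{\bf p}_1}$ into itself composed with layer-dependent time-shifts (the paper states this injectivity in the remark following the theorem), and conclude from the i.i.d.\ and stationarity hypotheses. The only difference is presentational — you argue at the level of whole layer processes where the paper checks finite-dimensional marginals after grouping coordinates by layer — and your explicit note that injectivity rather than bijectivity suffices is a accurate reading of what the argument actually uses.
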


\begin{proof}
	We are going to show that for every finite collection of points ${\bf v}_1,...,{\bf v}_k \in \mathbb{N}^d$, and ${\bf m}\in \mathbb{N}^d$ the joint distribution of 
	\[
	(\sigma_{{\bf m}\cdot {\bf v}_1},...,\sigma_{{\bf m}\cdot {\bf v}_k})
	\]
	coincides with that of 
	\[
	(\sigma_{{\bf v}_1}, ..., \sigma_{{\bf v}_k}).
	\]
	First, we write ${\bf v}_j={\bf i}_j \cdot {\bf p}_1^{\ell_j}$ with ${\bf i}_j \in \mathcal{I}_{{\bf p}_1}$ for all $1\leq j \leq k$ and ${\bf m}={\bf i}\cdot {\bf p}_1^{\ell}$ with ${\bf i}\in \mathcal{I}_{{\bf p}_1}$. Denote ${\bf i}'_j\cdot {\bf p}_1^{\ell'_j}={\bf i} \cdot{\bf i}_j $ with ${\bf i}'_j\in \mathcal{I}_{{\bf p}_1}$ for all $1\leq j \leq k$.
	
	Then by the definition of $\tau$, we have to prove that the joint distribution of 
	\[
	\tau_{\ell_j+\ell+\ell'_j}^{{\bf i}'_j }, ~j\in\{1,...,k  \}
	\]
	coincides with that of 
	\[
	\tau_{\ell_j}^{{\bf i}_j}, ~ j\in\{1,...,k  \}.
	\]
	Denote $\{ {\bf i}_1, ... ,{\bf i}_k  \}=\{  {\bf i}_{n_1}, ...,  {\bf i}_{n_M} \}$ where $ {\bf i}_{n_i} \neq  {\bf i}_{n_j}$ for all $1\leq i \neq j \leq M$. For $1\leq w \leq M$, define
	\[
	X^w=\left( \tau_{\ell_j+\ell+\ell'_j}^{{\bf i}'_j} : 1\leq j \leq k, {\bf i}_j= {\bf i}_{n_w} \right)
	\]
	and 
	\[
	Y^w=\left( \tau_{\ell_j}^{{\bf i}_j} : 1\leq j \leq k, {\bf i}_j= {\bf i}_{n_w} \right).
	\]
	Then, by the independence of the different layers, the joint distribution of $\tau_{\ell_j+\ell+\ell'_j}^{{\bf i}'_j},  j\in\{1,...,k  \}$ and $\tau_{\ell_j}^{{\bf i}_j},   j\in\{1,...,k  \}$ coincide with the joint distribution of $\otimes_{w=1}^{M}X^w$ and $\otimes_{w=1}^{M} Y^w$ respectively, where $\otimes$ denotes independent joining.
	
	Therefore, it remains to show that for each $1\leq w \leq M$, the distributions of $X^w$ and $Y^w$ coincide. Since the layers $\tau^{{\bf i}'_j}$ and $\tau^{{\bf i}_j}$ are i.i.d., we have $X^w$ and $Y^w$ have the same distribution. The proof is complete. 
\end{proof}

\begin{remark}
	We remark that for ${\bf i},{\bf i}_1\neq{\bf i}_2\in \mathcal{I}_{{\bf p}_1}$ if ${\bf i}\cdot {\bf i}_1={\bf i}'_1\cdot {\bf p}_1^{\ell_1}$ and ${\bf i}\cdot {\bf i}_2={\bf i}'_2\cdot {\bf p}_1^{\ell_2}$ with ${\bf i}'_1,{\bf i}'_2 \in \mathcal{I}_{{\bf p}_1}$, then ${\bf i}'_1 \neq {\bf i}'_2$. When $\ell_1=\ell_2$, the statement easily true. If we assume $\ell_1>\ell_2$ and ${\bf i}'_1 = {\bf i}'_2$, then ${\bf i}\cdot {\bf i}_1={\bf i}\cdot {\bf i}_2\cdot {\bf p}_1^{\ell_1-\ell_2}$. That gives ${\bf i}_1= {\bf i}_2\cdot {\bf p}_1^{\ell_1-\ell_2}$ which contradicts with ${\bf i}_1 \in \mathcal{I}_{{\bf p}_1}$.
\end{remark}

As a consequence of Theorem \ref{thm iid 2-multiple} with the existence of the infinte-volume limit in each layer of $\tau$ spins we have the following results.

\begin{theorem}\label{thm mu 2-multiple}
	Let 
	\begin{equation*}
		\mu_{N_1\times \cdots \times N_d}(\sigma_{\mathcal{N}_{N_1\times \cdots \times N_d}})=\frac{e^{-H_{N_1\times \cdots \times N_d}(\sigma_{\mathcal{N}_{N_1\times \cdots \times N_d}})}}{\sum_{\sigma_{\ell_{\bf j(i)}}=\pm 1, {\bf i}\in \mathcal{N}_{N_1\times \cdots \times N_d}}e^{-H_{N_1\times \cdots \times N_d}(\sigma_{\mathcal{N}_{N_1\times \cdots \times N_d}})}} 
	\end{equation*}
	be the finite-volume probability measure corresponding to the Hamiltonian 
	\begin{equation*}
		H_{N_1\times \cdots \times N_d}(\sigma_{\mathcal{N}_{N_1\times \cdots \times N_d}})= -\beta\sum_{i_1=1}^{N_1} \cdots \sum_{i_d=1}^{N_d} \sigma_{l_{\bf j(i)}} \sigma_{l_{{\bf j(i)}+1}},
	\end{equation*}
where ${\bf j(i)}$ is the unique number such that ${\bf i}=(k_1, ...,k_d)\cdot l_{\bf j(i)}$ with $(k_1, ...,k_d)\in \mathcal{I}_{{\bf p}_1}$. Then	
	\item[1.]Unique limit measure : The measure $\mu_{N_1\times \cdots \times N_d}$ have a unique weak limit (as $N_1,...,N_d\rightarrow \infty$) denoted by $\mu_{\infty}$ which is Gibbs.
	\item[2.]Independent Ising layers : Under $\mu_{\infty}$, the $\tau^{\bf i}, {\bf i}\in \mathcal{I}_{{\bf p}_1}$, are independent and distributed according to the standard Ising model measure $\mu_\infty^{Ising}$ with a free boundary condition on the left.
	\item[3.]Multiplication invariance : The measure $\mu_{\infty}$ is multiplication invariant.
\end{theorem}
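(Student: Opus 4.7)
The strategy is to lift the argument of Chazottes and Redig for Theorem \ref{thm 1.1} to $\mathbb{N}^d$ using the lattice decomposition provided by Lemma \ref{lemma Nd 2.4}. First, I would exploit the disjoint union $\mathbb{N}^d = \bigsqcup_{{\bf i} \in \mathcal{I}_{{\bf p}_1}} \mathcal{M}_{{\bf p}_1}({\bf i})$ to rewrite the Hamiltonian as a sum of one-dimensional Ising Hamiltonians running along each layer. Concretely, every summand $\sigma_{l_{{\bf j}({\bf i})}}\sigma_{l_{{\bf j}({\bf i})+1}}$ couples two consecutive sites on the same layer $\mathcal{M}_{{\bf p}_1}({\bf k})$ (the one to which ${\bf i}$ belongs), so
\[
H_{N_1 \times \cdots \times N_d}(\sigma) \;=\; \sum_{{\bf k}\in \mathcal{I}_{{\bf p}_1}} H^{({\bf k})}\bigl(\sigma|_{\mathcal{M}_{{\bf p}_1}({\bf k})}\bigr),
\]
where $H^{({\bf k})}$ is a finite nearest-neighbor 1D Ising Hamiltonian on the first $L({\bf k})+1$ sites of the ${\bf k}$-layer, with $L({\bf k})$ the length parameter encoded in the sets $\mathcal{J}_{N_1\times\cdots\times N_d;M_1,\ldots,M_d}$ of Definition \ref{definition:2.5}.

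This decomposition implies that the partition function factorizes across layers, so $\mu_{N_1\times\cdots\times N_d}$ equals a product of finite-volume 1D Ising measures, one for each ${\bf k}\in \mathcal{I}_{{\bf p}_1}$, each with a free left boundary. For any fixed ${\bf k}$, the length $L({\bf k})\to \infty$ as $N_1,\ldots,N_d\to\infty$, hence the ${\bf k}$-th marginal converges weakly to $\mu_\infty^{Ising}$ by item 1 of Theorem \ref{thm 1.1}. Since any cylinder event involves only finitely many layers, its probability converges to that assigned by the product measure $\bigotimes_{{\bf k}\in\mathcal{I}_{{\bf p}_1}} \mu_\infty^{Ising}$. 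This simultaneously settles part (1) (the weak limit exists, is unique, and equals this product), part (2) (the $\tau^{\bf k}$ are i.i.d.\ distributed as $\mu_\infty^{Ising}$), and the Gibbs property of $\mu_\infty$ (the finite-volume conditional distributions are determined by the explicit product formula and are continuous in boundary conditions, so they agree with the specification associated to the 2-multiple Hamiltonian).

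Part (3) is then immediate: with (1) and (2) in hand, the family $\{\tau^{\bf k} : {\bf k}\in\mathcal{I}_{{\bf p}_1}\}$ is i.i.d.\ and stationary under $\mu_\infty$, so Theorem \ref{thm iid 2-multiple} forces $\mu_\infty$ to be multiplication invariant. The main technical obstacle lies in the first step: one must verify that truncation effects, namely layers whose length $L({\bf k})$ is small or zero within the finite box $\mathcal{N}_{N_1 \times \cdots \times N_d}$, contribute negligibly to cylinder probabilities in the limit. This is precisely what parts 1 and 2 of Lemma \ref{lemma Nd decomposition 2.5 multiple} (specialized to $k=1$) guarantee: the density of short layers decays at the required rate, so only arbitrarily long layers matter for any fixed cylinder event as $N_1,\ldots,N_d\to\infty$.
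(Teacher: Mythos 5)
Your proposal is correct and follows essentially the same route as the paper, which states this theorem as an immediate consequence of Theorem \ref{thm iid 2-multiple} together with the existence of the infinite-volume limit in each $\tau$-layer; your write-up simply makes explicit the layer decomposition from Lemma \ref{lemma Nd 2.4}, the factorization of the partition function, and the per-layer convergence to $\mu_\infty^{Ising}$ that the paper leaves implicit. (The appeal to Lemma \ref{lemma Nd decomposition 2.5 multiple} for controlling short layers is not really needed for weak convergence of cylinder probabilities, since a fixed cylinder meets only finitely many layers, each of whose lengths eventually exceeds the relevant coordinates.)
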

\subsubsection{Generalization of 2-multiple Hamiltonian to $S^{\bf G}_{N_1\times \cdots \times N_d}$ on $\mathbb{N}^d$}
Let $\tau^{\bf i}=\{\tau^{\bf i}_{k}=\sigma_{{\bf i}\cdot l^{(j)}_{k}}\}_{k=1}^{\infty}$ for each ${\bf i}\in \mathcal{I}_{{\bf p}_1,..., {\bf p}_k}$. For each $1\leq i \leq d$, $\gcd(p_{ji},p_{j'i})=1$ for all $1\leq j\neq j' \leq k$.

\begin{theorem}\label{thm iid 2.5-multiple}
	Suppose that the $\{\tau^{\bf i} : {\bf i}\in \mathcal{I}_{{\bf p}_1,..., {\bf p}_k} \}$ form an i.i.d. stationary processes, then the distribution of the corresponding $\sigma$ is multiplication invariant.
\end{theorem}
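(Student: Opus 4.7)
The plan is to adapt the argument of Theorem \ref{thm iid 2-multiple} almost verbatim, with Lemma \ref{lemma Nd 2.4} supplying the unique decomposition needed in the multi-generator setting. The goal is to show that for any finite collection ${\bf v}_1,\ldots,{\bf v}_n\in\mathbb{N}^d$ and any ${\bf m}\in\mathbb{N}^d$, the joint law of $(\sigma_{{\bf m}\cdot{\bf v}_j})_{j=1}^{n}$ coincides with that of $(\sigma_{{\bf v}_j})_{j=1}^{n}$.

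First I would apply Lemma \ref{lemma Nd 2.4} to decompose each ${\bf v}_j={\bf i}_j\cdot{\bf q}_j$ with ${\bf i}_j\in\mathcal{I}_{{\bf p}_1,\ldots,{\bf p}_k}$ and ${\bf q}_j\in\mathcal{M}_{{\bf p}_1,\ldots,{\bf p}_k}$, and similarly ${\bf m}\cdot{\bf v}_j={\bf i}'_j\cdot{\bf q}'_j$. Under the fixed orderings $\prec^{(j)}$ on ${\bf G}$, this identifies $\sigma_{{\bf v}_j}=\tau^{{\bf i}_j}_{s_j}$ and $\sigma_{{\bf m}\cdot{\bf v}_j}=\tau^{{\bf i}'_j}_{s'_j}$ for appropriate indices $s_j,s'_j\geq 1$. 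It thus suffices to compare the joint distributions of the corresponding layer entries.

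Next I would partition the index set $\{1,\ldots,n\}$ by the value of ${\bf i}_j$, writing $\{{\bf i}_1,\ldots,{\bf i}_n\}=\{{\bf i}_{n_1},\ldots,{\bf i}_{n_M}\}$ with the ${\bf i}_{n_w}$ pairwise distinct, and define
\[
X^w=\bigl(\tau^{{\bf i}'_j}_{s'_j}\,:\,1\leq j\leq n,\ {\bf i}_j={\bf i}_{n_w}\bigr),\qquad Y^w=\bigl(\tau^{{\bf i}_j}_{s_j}\,:\,1\leq j\leq n,\ {\bf i}_j={\bf i}_{n_w}\bigr).
\]
Provided the map ${\bf i}_j\mapsto{\bf i}'_j$ is injective on the set of representatives, the i.i.d.\ assumption on $\{\tau^{\bf i}\}$ factorizes both joint distributions as $\otimes_{w=1}^{M}X^w$ and $\otimes_{w=1}^{M}Y^w$ respectively, reducing the problem to showing $X^w\stackrel{d}{=}Y^w$ for each $w$. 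Within a single layer this follows from the stationarity of $\tau^{\bf i}$ combined with the identity of its distribution across different ${\bf i}$.

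The main obstacle is establishing the analogue of the remark following Theorem \ref{thm iid 2-multiple}: namely that ${\bf i}_1\neq{\bf i}_2$ in $\mathcal{I}_{{\bf p}_1,\ldots,{\bf p}_k}$ forces ${\bf i}'_1\neq{\bf i}'_2$ when one writes ${\bf m}\cdot{\bf i}_r={\bf i}'_r\cdot{\bf q}'_r$. The proof is by contradiction: a collision would yield, for every coordinate $1\leq s\leq d$, an identity of the form $i_{1,s}\prod_{j}p_{js}^{a_j}=i_{2,s}\prod_{j}p_{js}^{b_j}$; cancelling common factors and using $\gcd(p_{js},p_{j's})=1$ for all $j\neq j'$ forces $p_{js}\mid i_{r,s}$ simultaneously for every $s$ and some $j$, contradicting membership in $\mathcal{I}_{{\bf p}_1,\ldots,{\bf p}_k}$. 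This is precisely the multi-generator version of the uniqueness argument already carried out inside the proof of Lemma \ref{lemma Nd 2.4}, and once it is in hand the rest of the proof proceeds exactly as in the single-generator case.
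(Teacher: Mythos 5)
Your proposal follows the paper's own proof essentially verbatim: the same reduction to finite-dimensional marginals, the same decomposition of each index into a representative in $\mathcal{I}_{{\bf p}_1,\ldots,{\bf p}_k}$ times a monomial in the ${\bf p}_i$'s, the same block factorization $\otimes_w X^w$ versus $\otimes_w Y^w$ via independence of layers, and the same coprimality argument for injectivity of ${\bf i}_j\mapsto{\bf i}'_j$ (which the paper records in the remark immediately following the theorem). No substantive differences.
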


\begin{proof}	
	We have to show that for every finite collection of points ${\bf v}_1,...,{\bf v}_{N} \in \mathbb{N}^d$ and ${\bf m}\in \mathbb{N}^d$, the joint distribution of 
	\[
	(\sigma_{{\bf m}\cdot {\bf v}_1},...,\sigma_{{\bf m}\cdot {\bf v}_N})
	\]
	coincides with that of 
	\[
	(\sigma_{{\bf v}_1}, ..., \sigma_{{\bf v}_N}).
	\]
	First, we write ${\bf v}_j={\bf i}_j \cdot {\bf p}_1^{\ell_{j1}}\cdots {\bf p}_k^{\ell_{jk}}$ with ${\bf i}_j \in \mathcal{I}_{{\bf p}_1,..., {\bf p}_k}$ for all $1\leq j \leq N$ and ${\bf m}={\bf i}\cdot {\bf p}_1^{m_1} \cdots {\bf p}_k^{m_k}$ with ${\bf i}\in \mathcal{I}_{{\bf p}_1,..., {\bf p}_k}$. Note that ${\bf i}'_j\cdot{\bf p}_1^{\ell'_{j1}}\cdots {\bf p}_k^{\ell'_{jk}}={\bf i} \cdot{\bf i}_j$ with ${\bf i}'_j\in \mathcal{I}_{{\bf p}_1,..., {\bf p}_k}$ for all $1\leq j \leq k$. For convenience, we denote $\ell_j=(\ell_{j1},...,\ell_{jk})$, $\ell_j'=(\ell'_{j1},...,\ell'_{jk})$ for all $1\leq j \leq k$ and $m=(m_1,...,m_k)$.
	
	Then by the definition of $\tau$, we have to prove that the joint distribution of 
	\[
	\tau_{\ell_j+m+\ell'_j}^{{\bf i}'_j},~j\in\{1,...,k\}
	\]
	coincides with that of 
	\[
	\tau_{\ell_j}^{{\bf i}_j},~j\in\{1,...,k  \}.
	\]
	Denote $\{ {\bf i}_1, ... ,{\bf i}_k  \}=\{  {\bf i}_{n_1}, ...,  {\bf i}_{n_M} \}$ where $ {\bf i}_{n_i} \neq  {\bf i}_{n_j}$ for all $1\leq i \neq j \leq M$. For $1\leq w \leq M$, we define
	\[
	X^w=\left( \tau_{\ell_j+m+\ell'_j}^{{\bf i}'_j}: 1\leq j \leq k, {\bf i}_j= {\bf i}_{n_w} \right)
	\]
	and 
	\[
	Y^w=\left( \tau_{\ell_j}^{{\bf i}_j} : 1\leq j \leq k, {\bf i}_j= {\bf i}_{n_w} \right).
	\]
	Then, by the independence of the different layers, the joint distribution of $\tau_{\ell_j+m+\ell'_j}^{{\bf i}'_j},  j\in\{1,...,k  \}$ and $\tau_{\ell_j}^{{\bf i}_j},   j\in\{1,...,k  \}$ coincide with the joint distribution of $\otimes_{w=1}^{M}X^w$ and $\otimes_{w=1}^{M} Y^w$ respectively.
	
	Therefore, it remains to show that for each $1\leq w \leq M$, the distributions of $X^w$ and $Y^w$ coincide. Since the layers $\tau^{{\bf i}'_j}$ and $\tau^{{\bf i}_j}$ are i.i.d., we have that $X^w$ and $Y^w$ have the same distribution. The proof is complete. 
\end{proof}

\begin{remark}
	If ${\bf i}, {\bf i}_1\neq {\bf i}_2 \in  \mathcal{I}_{{\bf p}_1,..., {\bf p}_k}$ and ${\bf i}\cdot {\bf i}_1={\bf i}'_1 \cdot {\bf p}_1^{\ell'_{11}}\cdots {\bf p}_k^{\ell'_{1k}}, {\bf i}\cdot {\bf i}_2={\bf i}'_2 \cdot {\bf p}_1^{\ell'_{21}}\cdots {\bf p}_k^{\ell'_{2k}}$ with ${\bf i}'_1,{\bf i}'_2 \in  \mathcal{I}_{{\bf p}_1,..., {\bf p}_k}$, then ${\bf i}'_1 \neq {\bf i}'_2$. When $(\ell'_{11},...,\ell'_{1k})=(\ell'_{21},...,\ell'_{2k})$, the statement clearly true. If $(\ell'_{11},...,\ell'_{1k})\neq(\ell'_{21},...,\ell'_{2k})$ and ${\bf i}'_1 = {\bf i}'_2$, without loss of generality, we may assume $\ell'_{11}>\ell'_{21}$. This implies ${\bf i}\cdot {\bf i}_1\cdot {\bf c}_1={\bf i}\cdot {\bf i}_2\cdot{\bf p}_1^{\ell'_{11}-\ell'_{21}} \cdot{\bf c}_2$, then $ {\bf i}_1\cdot{\bf c}_1= {\bf i}_2\cdot{\bf p}_1^{\ell'_{11}-\ell'_{21}}\cdot{\bf c}_2$ where ${\bf c}_1, {\bf c}_2$ are the vectors only multipliable by ${\bf p}_i$'s for $2\leq i \leq k$. Then the coprime property of ${\bf p}_1$ and ${\bf p}_i, 2\leq  i \leq k$ gives ${\bf p}_1 \mid {\bf i}_1$ which contradicts ${\bf i}_1 \in  \mathcal{I}_{{\bf p}_1,..., {\bf p}_k}$.
	
\end{remark}

As a consequence of Theorem \ref{thm iid 2-multiple} with the existence of the infinte-volume limit in each layer of $\tau$ spins we have the following theorem.

\begin{theorem}\label{thm mu 2.5-multiple}
	Let 
	\begin{equation*}
		\mu_{N_1\times \cdots \times N_d}(\sigma_{\mathcal{N}_{N_1\times \cdots \times N_d}})=\frac{e^{-H_{N_1\times \cdots \times N_d}(\sigma_{\mathcal{N}_{N_1\times \cdots \times N_d}})}}{\sum_{\sigma_{l^{(j)}_{\bf j(i)}}=\pm 1, {\bf i}\in \mathcal{N}_{N_1\times \cdots \times N_d}}e^{-H_{N_1\times \cdots \times N_d}(\sigma_{\mathcal{N}_{N_1\times \cdots \times N_d}})}} 
	\end{equation*}
	be the finite-volume probability measure corresponding to the Hamiltonian 
	\begin{equation*}
		H_{N_1\times \cdots \times N_d}(\sigma_{\mathcal{N}_{N_1\times \cdots \times N_d}})= -\beta\sum_{i_1=1}^{N_1} \cdots \sum_{i_d=1}^{N_d} \sigma_{l^{(j)}_{\bf j(i)}} \sigma_{l^{(j)}_{{\bf j(i)}+1}},
	\end{equation*}
 where ${\bf j(i)}$ is the unique number such that ${\bf i}=(k_1, ...,k_d)\cdot l^{(j)}_{\bf j(i)}$ with $(k_1, ...,k_d)\in \mathcal{I}_{{\bf p}_1,..., {\bf p}_k}$. Then	
	\item[1.]Unique limit measure : The measure $\mu_{N_1\times \cdots \times N_d}$ have a unique weak limit with respect to order $\prec^{(j)}$ denoted by $\mu^{(j)}_{\infty}$ which is Gibbs.
	\item[2.]Independent Ising layers : Under $\mu^{(j)}_{\infty}$, the $\tau^{\bf i}, {\bf i}\in \mathcal{I}_{{\bf p}_1,...,{\bf p}_k}$ are independent and distributed according to the standard Ising model measure $\mu_\infty^{Ising}$ with a free boundary condition on the left.
	\item[3.]Multiplication invariance : The measure $\mu^{(j)}_{\infty}$ is multiplication invariant.
\end{theorem}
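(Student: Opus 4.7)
The plan is to run the same argument used for Theorem \ref{thm mu 2-multiple}, but driven by the finer decomposition of $\mathbb{N}^d$ from Lemma \ref{lemma Nd 2.4}. The structural observation is that, when the sublattice $\mathcal{M}_{{\bf p}_1,\ldots,{\bf p}_k}({\bf i})$ based at ${\bf i}\in \mathcal{I}_{{\bf p}_1,\ldots,{\bf p}_k}$ is enumerated in the $\prec^{(j)}$-order of ${\bf G}$, one recovers exactly the chain $\tau^{\bf i}=\{\sigma_{{\bf i}\cdot l^{(j)}_n}\}_{n\geq 1}$, and the Hamiltonian only couples spins at consecutive indices of a single such chain. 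Hence $H_{N_1\times\cdots\times N_d}$ splits as a disjoint sum over layers.

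First I would rewrite
\[
H_{N_1\times\cdots\times N_d}(\sigma) \;=\; -\beta \sum_{{\bf i}\in \mathcal{I}_{{\bf p}_1,\ldots,{\bf p}_k}}\, \sum_{n=1}^{L({\bf i})-1} \tau^{\bf i}_n\,\tau^{\bf i}_{n+1},
\]
where $L({\bf i})$ is the largest $n$ with ${\bf i}\cdot l^{(j)}_n \in \mathcal{N}_{N_1\times\cdots\times N_d}$; only layers with $L({\bf i})\geq 2$ contribute. Since the spin variables appearing in different summands are indexed by the disjoint sets $\mathcal{M}_{{\bf p}_1,\ldots,{\bf p}_k}({\bf i})\cap \mathcal{N}_{N_1\times\cdots\times N_d}$, the partition function factorises and so does the Gibbs measure:
\[
\mu_{N_1\times\cdots\times N_d} \;=\; \bigotimes_{{\bf i}\in \mathcal{I}_{{\bf p}_1,\ldots,{\bf p}_k}} \nu^{\bf i}_{L({\bf i})},
\]
where $\nu^{\bf i}_L$ is the standard finite-volume 1D nearest-neighbour Ising Gibbs measure on $L$ sites with free boundary conditions.

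Next I would pass to the thermodynamic limit. For every fixed ${\bf i}\in \mathcal{I}_{{\bf p}_1,\ldots,{\bf p}_k}$ one has $L({\bf i})\to \infty$ as $N_1,\ldots,N_d\to \infty$ (the $\prec^{(j)}$-initial segment of $\bf G$ sits in any sufficiently large box), and classical 1D Ising theory gives $\nu^{\bf i}_L \Rightarrow \mu^{Ising}_\infty$ with the free left boundary. Layerwise weak convergence of product measures produces a unique weak limit $\mu^{(j)}_\infty = \bigotimes_{{\bf i}\in \mathcal{I}_{{\bf p}_1,\ldots,{\bf p}_k}}\mu^{Ising}_\infty$, which proves parts 1 and 2; Gibbsness lifts from each factor to the product since the interaction itself is the disjoint sum of the 1D Ising interactions on the layers. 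Part 3 is then an immediate application of Theorem \ref{thm iid 2.5-multiple}: the hypothesis on $\{\tau^{\bf i}\}$ being i.i.d.\ stationary is precisely what was just established, so $\mu^{(j)}_\infty$ is multiplication invariant.

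The main obstacle is ensuring that the bookkeeping of the Hamiltonian decomposition is exact: I have to check that, as ${\bf i}$ ranges over $\mathcal{N}_{N_1\times\cdots\times N_d}$, the bonds $(\sigma_{{\bf i}},\sigma_{{\bf k}\cdot l^{(j)}_{{\bf j(i)}+1}})$ visit every consecutive pair inside each truncated chain exactly once and never couple distinct layers. Disjointness of layers is guaranteed by Lemma \ref{lemma Nd 2.4} together with the remark after Theorem \ref{thm iid 2.5-multiple}; well-definedness of the ordering relies on $\gcd(p_{ij},p_{i'j})=1$ making the projection ${\bf G}\to S^{(j)}$ a bijection. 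Asymmetric truncation at the finite volume (different layers have different $L({\bf i})$) is harmless, since free-boundary 1D Ising is weakly continuous in chain length and the product topology only probes finitely many layers at once.
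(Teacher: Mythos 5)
Your proposal is correct and follows essentially the same route the paper intends: the paper offers no written proof, stating only that the theorem is ``a consequence of Theorem \ref{thm iid 2-multiple} with the existence of the infinite-volume limit in each layer of $\tau$ spins,'' which is precisely your argument of factorising the Hamiltonian over the disjoint layers $\mathcal{M}_{{\bf p}_1,\ldots,{\bf p}_k}({\bf i})$ from Lemma \ref{lemma Nd 2.4}, passing to the free-boundary 1D Ising limit in each layer, and invoking Theorem \ref{thm iid 2.5-multiple} for multiplication invariance. The only cosmetic imprecision is writing the per-layer bond set as an initial segment $\{1,\ldots,L({\bf i})-1\}$ (in the $\prec^{(j)}$ order it need not be an interval, since membership of ${\bf i}\cdot l^{(j)}_n$ in the box depends on all coordinates), but your own closing remark about the product topology probing only finitely many sites already disposes of this.
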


\begin{remark}
	We remark that the order $\prec^{(j)}$ leads $\mu_{N_1\times \cdots \times N_d}$ to be Markov on each independent layer $\tau^{\bf i}, {\bf i}\in \mathcal{I}_{{\bf p}_1,...,{\bf p}_k} $.
\end{remark}

\subsection{Kolmogorov-Sinai entropy with respect to the limit measure on $\mathbb{N}^d$}

\subsubsection{2-multiple Hamiltonian}
Recall that $\mathcal{L}_{N_1\times \cdots \times N_d  }({\bf i})=\mathcal{M}_{{\bf p}_1}({\bf i})\cap \mathcal{N}_{N_1\times \cdots \times N_d }$ is the subset of $\mathcal{M}_{{\bf p}_1}({\bf i})$, which belongs to the $\mathcal{N}_{N_1\times \cdots \times N_d }$ lattice. Then we have results as follows.
\begin{lemma}\label{lemma useful 1}
	Let $\phi : \mathbb{N}\rightarrow\mathbb{R}$ be a measurable function such that there exist $C'>0$ and $r>0$ such that $|\phi(n)|\leq C'n^r$ for all $n\in \mathbb{N}$. Then we have
	\begin{equation*}
		\begin{aligned}
			\lim_{N_1,...,N_d\rightarrow\infty}\frac{1}{N_1\cdots N_d}\sum_{{\bf i}\in \mathcal{I}_{\bf p}}\phi(|\mathcal{L}_{N_1\times \cdots \times N_d  }({\bf i})|)=\sum_{\ell=1}^{\infty}\frac{(p_1\cdots p_d -1)^2}{(p_1\cdots p_d)^{\ell+1}}\phi(\ell).
		\end{aligned}
	\end{equation*} 
\end{lemma}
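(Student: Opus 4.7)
The plan is to partition the sum according to the value of $|\mathcal{L}_{N_1\times\cdots\times N_d}({\bf i})|$, rewrite it in terms of the sets $\mathcal{K}_{N_1\times\cdots\times N_d;M_1,\ldots,M_d}$ from Definition~\ref{definition:2.5} (specialised to $k=1$, so that $\ell^{(j)}_{M_j}=p_j^{M_j-1}$), and then invoke Lemma~\ref{lemma Nd decomposition 2.5 multiple} term by term. The initial observation is that on $\mathcal{K}_{N_1\times\cdots\times N_d;M_1,\ldots,M_d}$ the integer $M_j$ is the largest $m$ with $p_j^{m-1}i_j\leq N_j$, so the largest exponent $\ell$ for which every coordinate $p_j^{\ell}i_j$ still lies in $\mathcal{N}_{N_1\times\cdots\times N_d}$ equals $\min_{1\leq j\leq d}M_j-1$, giving $|\mathcal{L}_{N_1\times\cdots\times N_d}({\bf i})|=\min_j M_j$. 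With the convention $\phi(0)=0$, which absorbs the (vanishing) contribution of ${\bf i}\notin\mathcal{N}_{N_1\times\cdots\times N_d}$, this rewrites
\begin{equation*}
\frac{1}{N_1\cdots N_d}\sum_{{\bf i}\in\mathcal{I}_{{\bf p}}}\phi(|\mathcal{L}_{N_1\times\cdots\times N_d}({\bf i})|)=\frac{1}{N_1\cdots N_d}\sum_{M_1,\ldots,M_d\geq 1}|\mathcal{K}_{N_1\times\cdots\times N_d;M_1,\ldots,M_d}|\,\phi(\min_j M_j).
\end{equation*}

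Next I would exchange the limit $N_1,\ldots,N_d\to\infty$ with the $(M_1,\ldots,M_d)$-sum via a Weierstrass M-test, in the spirit of the proof of Lemma~\ref{lemma Nd decomposition 2.5 multiple}(3). The ingredients are the $N$-free upper bound
\begin{equation*}
\frac{|\mathcal{K}_{N_1\times\cdots\times N_d;M_1,\ldots,M_d}|}{N_1\cdots N_d}\leq\prod_{j=1}^{d}\frac{p_j-1}{p_j^{M_j}},
\end{equation*}
which follows from Lemma~\ref{lemma Nd decomposition 2.5 multiple}(1), together with the hypothesis $|\phi(\min_j M_j)|\leq C'(M_1+\cdots+M_d)^r$; their product is summable in $(M_1,\ldots,M_d)$ because geometric decay in each $M_j$ dominates polynomial growth. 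Lemma~\ref{lemma Nd decomposition 2.5 multiple}(1)--(2) then deliver the termwise limit
\begin{equation*}
\lim_{N_1,\ldots,N_d\to\infty}\frac{|\mathcal{K}_{N_1\times\cdots\times N_d;M_1,\ldots,M_d}|}{N_1\cdots N_d}=\Bigl(1-\frac{1}{p_1\cdots p_d}\Bigr)\prod_{j=1}^{d}\frac{p_j-1}{p_j^{M_j}}.
\end{equation*}

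To close the argument, write $P=p_1\cdots p_d$ and read $\prod_j(p_j-1)/p_j^{M_j}$ as the joint law of independent geometric variables $X_j$ on $\{1,2,\ldots\}$. The tail identity $\Pr(\min_j X_j\geq M)=\prod_j p_j^{-(M-1)}=P^{-(M-1)}$ yields $\Pr(\min_j X_j=M)=(P-1)/P^M$, so grouping the $(M_1,\ldots,M_d)$'s by $M=\min_j M_j$ collapses the expression to $\sum_{M=1}^{\infty}\frac{(P-1)^2}{P^{M+1}}\phi(M)$, matching the claim. I expect the limit interchange to be the only genuinely nontrivial step: one has to verify cleanly that the polynomial growth bound on $\phi$ combines with the geometric decay of $|\mathcal{K}|/(N_1\cdots N_d)$ in each coordinate to give an $N$-independent summable dominator. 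Everything else is a direct assembly of Lemma~\ref{lemma Nd decomposition 2.5 multiple} with an elementary distributional identity for the minimum of independent geometrics.
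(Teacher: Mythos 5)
Your proposal is correct and follows essentially the same route as the paper: group the indices ${\bf i}\in\mathcal{I}_{\bf p}$ by the length of their layer inside $\mathcal{N}_{N_1\times\cdots\times N_d}$, use the counting/density statements of Lemma \ref{lemma Nd decomposition 2.5 multiple} for the termwise limits, and justify the interchange of limit and sum by a Weierstrass M-test. The only difference is bookkeeping — you first decompose over the full multi-index $(M_1,\ldots,M_d)$ and then collapse via the minimum-of-geometrics identity, whereas the paper groups directly by $\ell=|\mathcal{L}_{N_1\times\cdots\times N_d}({\bf i})|$ — and your version is, if anything, slightly more explicit about why the count of indices with layer length $\ell$ has density $(p_1\cdots p_d-1)^2/(p_1\cdots p_d)^{\ell+1}$.
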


\begin{proof}
	The proof is a direct consequence of Lemma \ref{lemma Nd decomposition 2.5 multiple}. More precisely,
	\begin{equation*}
		\begin{aligned}
			&	\lim_{N_1,...,N_d\rightarrow\infty}\frac{1}{N_1\cdots N_d}\sum_{{\bf i}\in \mathcal{I}_{\bf p}}\phi(|\mathcal{L}_{N_1\times \cdots \times N_d  }({\bf i})|)\\
			&=\lim_{N_1,...,N_d\rightarrow\infty}\frac{1}{N_1\cdots N_d}\sum_{\ell=1}^{K(N_1,...,N_d)}\left( 1-\frac{1}{p_1\cdots p_d}\right)\left( \frac{N_1\cdots N_d}{(p_1\cdots p_d)^{\ell-1}}-\frac{N_1\cdots N_d}{(p_1\cdots p_d)^{\ell}}\right)\phi(\ell)\\	
			&=\lim_{N_1,...,N_d\rightarrow\infty}\frac{1}{N_1\cdots N_d}\sum_{\ell=1}^{K(N_1,...,N_d)}\frac{N_1\cdots N_d(p_1\cdots p_d-1)^2}{(p_1\cdots p_d)^{\ell+1}}\phi(\ell)\\
			&=\sum_{\ell=1}^{\infty}\frac{(p_1\cdots p_d-1)^2}{(p_1\cdots p_d)^{\ell+1}}\phi(\ell),
		\end{aligned}
	\end{equation*} 
	where the last equality holds by Weierstrass M-test with $|\phi(\ell)|\leq C'\ell^r$ and $K(N_1,...,N_d)$ is the maximum cardinality of $\mathcal{L}_{N_1\times \cdots \times N_d  }({\bf i})$ for all ${\bf i}\in \mathcal{I}_{\bf p}$.
\end{proof}

Theorem \ref{thm KS 2multi Nd} below is an immediate consequence of Lemma \ref{lemma useful 1}.

\begin{theorem}\label{thm KS 2multi Nd}
	The explicit formula for the KS entropy of $\mu_\infty$ is
	\begin{equation*}
		\begin{aligned}
		&-\lim_{N_1,...,N_d\rightarrow\infty}\frac{1}{N_1\cdots N_d} \mathbb{E}_{\mu_\infty} \log \mu_{\infty}(\sigma_{\mathcal{N}_{N_1\times \cdots \times N_d}})\\
		&=-\sum_{k=1}^{\infty} \frac{(p_1\cdots p_d-1)^2}{(p_1\cdots p_d)^{k+1}} \mathbb{E}_{\mu_\infty^{Ising}} \log \mu_\infty^{Ising} (\tau_0,...,\tau_{k-1}).
\end{aligned}	
\end{equation*}
\end{theorem}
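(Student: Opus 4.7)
The plan is to reduce the Kolmogorov--Sinai entropy to the shape handled by Lemma \ref{lemma useful 1} and then read off the answer. First, by Theorem \ref{thm mu 2-multiple}(2), under $\mu_\infty$ the layers $\{\tau^{\bf i}\}_{{\bf i}\in \mathcal{I}_{{\bf p}_1}}$ are mutually independent and each distributed as the standard one-sided Ising measure $\mu_\infty^{Ising}$. Combined with the partition of $\mathbb{N}^d$ supplied by Lemma \ref{lemma Nd 2.4}, the box $\mathcal{N}_{N_1\times\cdots\times N_d}$ decomposes as the disjoint union of the layer-traces $\mathcal{L}_{N_1\times\cdots\times N_d}({\bf i})$ as ${\bf i}$ runs over $\mathcal{I}_{{\bf p}_1}$. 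Therefore
\begin{equation*}
\mu_\infty\bigl(\sigma_{\mathcal{N}_{N_1\times\cdots\times N_d}}\bigr)=\prod_{{\bf i}\in \mathcal{I}_{{\bf p}_1}}\mu_\infty^{Ising}\bigl(\tau^{\bf i}_0,\ldots,\tau^{\bf i}_{|\mathcal{L}_{N_1\times\cdots\times N_d}({\bf i})|-1}\bigr),
\end{equation*}
with the convention that a factor whose layer-trace is empty equals $1$. Taking $-\log$ and integrating against $\mu_\infty$ gives
\begin{equation*}
-\mathbb{E}_{\mu_\infty}\log \mu_\infty\bigl(\sigma_{\mathcal{N}_{N_1\times\cdots\times N_d}}\bigr)=-\sum_{{\bf i}\in \mathcal{I}_{{\bf p}_1}}\mathbb{E}_{\mu_\infty^{Ising}}\log \mu_\infty^{Ising}\bigl(\tau_0,\ldots,\tau_{|\mathcal{L}_{N_1\times\cdots\times N_d}({\bf i})|-1}\bigr).
\end{equation*}

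Next, I introduce the auxiliary function $\phi(n):=\mathbb{E}_{\mu_\infty^{Ising}}\log\mu_\infty^{Ising}(\tau_0,\ldots,\tau_{n-1})$ for $n\geq 1$, with $\phi(0):=0$. Since $\mu_\infty^{Ising}$ is a probability measure on the $2^n$-element space $\{-1,+1\}^n$, one has the crude bound $|\phi(n)|\leq n\log 2$, which in particular satisfies the polynomial growth hypothesis $|\phi(n)|\leq C' n^r$ required by Lemma \ref{lemma useful 1}. Applying that lemma with this $\phi$ yields
\begin{equation*}
\lim_{N_1,\ldots,N_d\to\infty}\frac{1}{N_1\cdots N_d}\sum_{{\bf i}\in \mathcal{I}_{{\bf p}_1}}\phi\bigl(|\mathcal{L}_{N_1\times\cdots\times N_d}({\bf i})|\bigr)=\sum_{\ell=1}^{\infty}\frac{(p_1\cdots p_d-1)^2}{(p_1\cdots p_d)^{\ell+1}}\phi(\ell).
\end{equation*}

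Combining the displayed identity with this limit, after dividing by $N_1\cdots N_d$, produces exactly the formula claimed in Theorem \ref{thm KS 2multi Nd}. The only step requiring any real care is the verification of the polynomial bound on $\phi$ so that Lemma \ref{lemma useful 1} may be invoked; the rest is bookkeeping from the independent-layer product structure guaranteed by Theorem \ref{thm mu 2-multiple}. There is no analytic obstacle beyond this, since the hard uniform-convergence step is already packaged inside Lemma \ref{lemma useful 1} (and ultimately Lemma \ref{lemma Nd decomposition 2.5 multiple}(3)).
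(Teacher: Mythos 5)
Your proof is correct and follows exactly the route the paper intends: the paper derives Theorem \ref{thm KS 2multi Nd} as an ``immediate consequence'' of Lemma \ref{lemma useful 1}, and you supply precisely the implicit steps --- the factorization of $\mu_\infty$ over the independent layers of Lemma \ref{lemma Nd 2.4} and Theorem \ref{thm mu 2-multiple}(2), and the choice $\phi(n)=\mathbb{E}_{\mu_\infty^{Ising}}\log\mu_\infty^{Ising}(\tau_0,\ldots,\tau_{n-1})$ with the entropy bound $|\phi(n)|\leq n\log 2$ verifying the lemma's growth hypothesis. No discrepancy with the paper's argument.
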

\subsubsection{Generalization of 2-multiple Hamiltonian to $S^{\bf G}_{N_1\times \cdots \times N_d}$ on $\mathbb{N}^d$}
For each $1\leq j \leq d$ and ${\bf i}\in  \mathcal{I}_{{\bf p}_1,...,{\bf p}_k}\cap \mathcal{N}_{N_1\times\cdots \times N_d}$, let $\mathcal{L}^{(j)}_{N_1\times \cdots \times N_d  }({\bf i})$ be the subset of $\mathcal{M}_{{\bf p}_1,...,{\bf p}_k}({\bf i})$ which satisfies $i_j p_{1j}^{\ell_1}\cdots p_{kj}^{\ell_k} \leq N_j$. Then the similar result of Lemma \ref{lemma useful 1} is obtained.

\begin{lemma}\label{lemma useful 2}
	Let $\phi : \mathbb{N}\rightarrow\mathbb{R}$ be a measurable function such that there exist $C'>0$ and $r>0$ such that $|\phi(n)|\leq C'n^r$ for all $n\in \mathbb{N}$. Then we have
	\begin{equation*}
		\begin{aligned}
			&\lim_{N_1,...,N_d\rightarrow\infty}\frac{1}{N_1\cdots N_d}\sum_{{\bf i}\in \mathcal{I}_{{\bf p}_1,...,{\bf p}_k}}\phi(|\mathcal{L}^{(j)}_{N_1\times \cdots \times N_d  }({\bf i})|)\\
			&=\prod_{i=1}^k \left(1-\frac{1}{p_{i1}\cdots p_{id}}\right)\sum_{M_1,...,M_d=1}^{\infty}\left[ \prod_{i=1}^{d}\left(\frac{1}{\ell_{M_i}^{(i)}}-\frac{1}{\ell_{M_i+1}^{(i)}}\right)\right]\phi(M_j).
		\end{aligned}
	\end{equation*} 
\end{lemma}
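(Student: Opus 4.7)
The plan is to mirror the proof of Lemma \ref{lemma useful 1}, but now exploiting the finer decomposition of $\mathcal{I}_{{\bf p}_1,\ldots,{\bf p}_k}\cap\mathcal{N}_{N_1\times\cdots\times N_d}$ furnished by the sets $\mathcal{K}_{N_1\times\cdots\times N_d;M_1,\ldots,M_d}$ of Definition \ref{definition:2.5}. The first step is to identify $|\mathcal{L}^{(j)}_{N_1\times\cdots\times N_d}({\bf i})|$ in closed form. Because the primes $p_{1j},\ldots,p_{kj}$ are pairwise coprime, the map $(\ell_1,\ldots,\ell_k)\mapsto p_{1j}^{\ell_1}\cdots p_{kj}^{\ell_k}$ is injective and enumerates $S^{(j)}$. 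Hence $|\mathcal{L}^{(j)}_{N_1\times\cdots\times N_d}({\bf i})|$ is exactly the number of elements of $S^{(j)}$ not exceeding $N_j/i_j$, so if ${\bf i}\in \mathcal{K}_{N_1\times\cdots\times N_d;M_1,\ldots,M_d}$ then $|\mathcal{L}^{(j)}_{N_1\times\cdots\times N_d}({\bf i})|=M_j$.

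Using the disjoint union $\mathcal{I}_{{\bf p}_1,\ldots,{\bf p}_k}\cap\mathcal{N}_{N_1\times\cdots\times N_d}=\bigsqcup_{1\le M_i\le N_i}\mathcal{K}_{N_1\times\cdots\times N_d;M_1,\ldots,M_d}$, the sum on the left becomes
\[
\frac{1}{N_1\cdots N_d}\sum_{M_1=1}^{N_1}\cdots\sum_{M_d=1}^{N_d}|\mathcal{K}_{N_1\times\cdots\times N_d;M_1,\ldots,M_d}|\,\phi(M_j).
\]
Next I would apply parts 1 and 2 of Lemma \ref{lemma Nd decomposition 2.5 multiple} to obtain the pointwise limit
\[
\lim_{N_1,\ldots,N_d\to\infty}\frac{|\mathcal{K}_{N_1\times\cdots\times N_d;M_1,\ldots,M_d}|}{N_1\cdots N_d}=\prod_{i=1}^{k}\left(1-\frac{1}{p_{i1}\cdots p_{id}}\right)\prod_{i=1}^{d}\left(\frac{1}{\ell^{(i)}_{M_i}}-\frac{1}{\ell^{(i)}_{M_i+1}}\right),
\]
and then invoke part 3 of the same lemma (with $a_{M_1,\ldots,M_d}=e^{\phi(M_j)}$, for instance, or by directly reusing the dominated-convergence argument given there) to commute the limit with the summation over $(M_1,\ldots,M_d)$.

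The genuine obstacle is justifying this interchange, since $\phi$ is only bounded polynomially and the extra $\phi(M_j)$ does not itself decay in the other indices. I would handle this by a Weierstrass M-test exactly as in the proof of Lemma \ref{lemma Nd decomposition 2.5 multiple}(3): dominate the summand by $\prod_{i=1}^{d}(\ell^{(i)\,-1}_{M_i}-\ell^{(i)\,-1}_{M_i+1})|\phi(M_j)|$, bound $|\phi(M_j)|\le C'M_j^{r}\le C'(M_1\cdots M_d)^{r}$, and note that each $\ell^{(i)}_{M_i}$ grows at least geometrically in $M_i$ (since some $p_{si}\ge 2$), so the resulting $d$-fold sum converges. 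This provides uniform domination in $N_1,\ldots,N_d$, legalizes the exchange of limit and sum, and delivers the stated identity.
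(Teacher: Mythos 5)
Your proposal is correct and follows essentially the same route as the paper's (very terse) proof, which simply instructs the reader to repeat the argument of Lemma \ref{lemma useful 1} using Lemma \ref{lemma Nd decomposition 2.5 multiple}: the decomposition of $\mathcal{I}_{{\bf p}_1,\ldots,{\bf p}_k}\cap\mathcal{N}_{N_1\times\cdots\times N_d}$ into the sets $\mathcal{K}_{N_1\times\cdots\times N_d;M_1,\ldots,M_d}$, the identification $|\mathcal{L}^{(j)}_{N_1\times\cdots\times N_d}({\bf i})|=M_j$ on each such set, the density limits from parts 1--2, and the Weierstrass M-test interchange from part 3 are exactly the intended steps. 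One small correction to your justification of the dominating series: for $k\ge 2$ generators, $\ell^{(i)}_{M}$ grows like $e^{cM^{1/k}}$ (super-polynomial but not geometric, since the counting function of the semigroup is of order $(\log x)^k$), which is still more than enough to dominate the polynomial factor $C'(M_1\cdots M_d)^r$ and make the $d$-fold sum converge.
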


\begin{proof}
	Applying the same argument of the Lemma \ref{lemma useful 1} with Lemma \ref{lemma Nd decomposition 2.5 multiple}, the proof is complete.
\end{proof}

By Lemma \ref{lemma useful 2}, we have the following result.

\begin{theorem}\label{thm Ks 2.5 Nd}
	For each $1\leq j \leq d$, the explicit formula for the KS entropy of $\mu^{(j)}_\infty$ is
	\begin{equation*}
		\begin{aligned}
			&-\lim_{N_1,...,N_d\rightarrow \infty} \frac{1}{N_1\cdots N_d} \mathbb{E}_{\mu_\infty} \log \mu^{(j)}_{\infty}(\sigma_{\mathcal{N}_{N_1\times \cdots \times N_d}})\\
			&=-C\sum_{k=1}^{\infty} \left( \frac{1}{\ell^{(j)}_k} -\frac{1}{\ell^{(j)}_{k+1}}  \right) \mathbb{E}_{\mu_\infty^{Ising}} \log \mu_\infty^{Ising} (\tau_0,...,\tau_{k-1}),
		\end{aligned}
	\end{equation*}
where $C$ is defined in Theorem \ref{theorem main Nd 2.5multiple}.
\end{theorem}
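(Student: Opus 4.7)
The plan is to mirror the argument used for Theorem \ref{thm KS 2multi Nd}, but invoking Lemma \ref{lemma useful 2} in place of Lemma \ref{lemma useful 1} so as to handle the multi-generator semigroup $\mathbf{G}=\langle {\bf p}_1,\ldots,{\bf p}_k\rangle$ on $\mathbb{N}^d$. First, I would exploit the layer decomposition established in Theorem \ref{thm mu 2.5-multiple}: under $\mu^{(j)}_\infty$, the family $\{\tau^{\bf i}:{\bf i}\in\mathcal{I}_{{\bf p}_1,\ldots,{\bf p}_k}\}$ is independent with each $\tau^{\bf i}$ distributed as the one-sided standard Ising measure $\mu_\infty^{Ising}$. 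Hence for fixed $N_1,\ldots,N_d$, the probability $\mu^{(j)}_\infty(\sigma_{\mathcal{N}_{N_1\times\cdots\times N_d}})$ factors as a product over ${\bf i}\in \mathcal{I}_{{\bf p}_1,\ldots,{\bf p}_k}\cap\mathcal{N}_{N_1\times\cdots\times N_d}$, with the ${\bf i}$-factor given by the Ising probability of a block of length $|\mathcal{L}^{(j)}_{N_1\times\cdots\times N_d}({\bf i})|$ sampled from the layer $\tau^{\bf i}$.

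Taking logarithm turns this product into a sum, and independence of the layers makes the expectation under $\mu^{(j)}_\infty$ split term-by-term. Each ${\bf i}$-contribution is exactly
\[
\phi\bigl(|\mathcal{L}^{(j)}_{N_1\times\cdots\times N_d}({\bf i})|\bigr), \qquad \text{where } \phi(n):=\mathbb{E}_{\mu_\infty^{Ising}}\log\mu_\infty^{Ising}(\tau_0,\ldots,\tau_{n-1}).
\]
Since the one-sided Ising Gibbs probabilities are bounded between $c\cdot 2^{-n}$ and $1$ for some $c>0$, we have $|\phi(n)|\le C' n$, so the polynomial-growth hypothesis of Lemma \ref{lemma useful 2} is satisfied. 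Applying the lemma directly to this $\phi$ yields
\[
-\lim_{N_1,\ldots,N_d\to\infty}\frac{1}{N_1\cdots N_d}\,\mathbb{E}_{\mu^{(j)}_\infty}\log\mu^{(j)}_\infty(\sigma_{\mathcal{N}_{N_1\times\cdots\times N_d}})=-\prod_{i=1}^{k}\!\left(1-\frac{1}{p_{i1}\cdots p_{id}}\right)\sum_{M_1,\ldots,M_d=1}^{\infty}\prod_{i=1}^{d}\!\left(\frac{1}{\ell^{(i)}_{M_i}}-\frac{1}{\ell^{(i)}_{M_i+1}}\right)\phi(M_j).
\]

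The final step is a telescoping simplification: for each $i\neq j$, because $\ell^{(i)}_1=1$ and $\ell^{(i)}_{M_i}\to\infty$, one has $\sum_{M_i=1}^{\infty}\bigl(\tfrac{1}{\ell^{(i)}_{M_i}}-\tfrac{1}{\ell^{(i)}_{M_i+1}}\bigr)=1$. Summing out the $d-1$ indices $\{M_i:i\ne j\}$ collapses the multi-sum to a single sum over $M_j$ and produces the advertised formula with the constant $C=\prod_{i=1}^{k}(1-1/(p_{i1}\cdots p_{id}))$ identified in Theorem \ref{theorem main Nd 2.5multiple}.

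The only nontrivial step is the growth control on $\phi$ needed to apply Lemma \ref{lemma useful 2}; this is standard for the half-line Ising model but is the only place where anything beyond bookkeeping is required. The factorization over independent layers and the telescoping identity are purely algebraic once the lemmas from Section \ref{section 2} are in hand, making the overall proof a short assembly rather than a new calculation.
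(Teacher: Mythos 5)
Your overall strategy --- factor $\mu^{(j)}_\infty$ over the independent layers given by Theorem \ref{thm mu 2.5-multiple}, recognize each layer's contribution as $\phi(|\mathcal{L}^{(j)}_{N_1\times\cdots\times N_d}({\bf i})|)$ with $\phi(n)=\mathbb{E}_{\mu_\infty^{Ising}}\log\mu_\infty^{Ising}(\tau_0,\ldots,\tau_{n-1})$, check $|\phi(n)|\le C'n$, and invoke Lemma \ref{lemma useful 2} --- is exactly the route the paper takes; its entire proof is the sentence ``By Lemma \ref{lemma useful 2}, we have the following result,'' so you are supplying precisely the omitted bookkeeping, and the factorization and growth-control steps are fine (the specific lower bound $c\cdot 2^{-n}$ on cylinder probabilities is not right for general $\beta$, but $e^{-cn}$ is, which is all you need).

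The problem is your last step. You telescope the $d-1$ sums over $M_i$, $i\ne j$, via $\sum_{M_i}\bigl(1/\ell^{(i)}_{M_i}-1/\ell^{(i)}_{M_i+1}\bigr)=1$ --- which is correct --- and then assert that the resulting constant $\prod_{i=1}^k\bigl(1-1/(p_{i1}\cdots p_{id})\bigr)$ is ``the $C$ identified in Theorem \ref{theorem main Nd 2.5multiple}.'' It is not: the paper defines $C=\prod_{i=1}^{k}\bigl(1-\tfrac{1}{p_{i1}\cdots p_{id}}\bigr)\prod_{1\le i\ne j\le d}\gamma(S^{(i)})$ with $\gamma(S^{(i)})=\sum_n 1/\ell^{(i)}_n$, which carries an extra factor $\prod_{i\ne j}\gamma(S^{(i)})>1$ whenever some $S^{(i)}$, $i\ne j$, is nontrivial. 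So your computation does not land on the advertised formula, and it cannot be repaired from Lemma \ref{lemma useful 2} as stated: the telescoped value of each $i\ne j$ sum really is $1$, not $\gamma(S^{(i)})$ (that would be the different series $\sum_n 1/\ell^{(i)}_n$). Either you must explain where the extra $\prod_{i\ne j}\gamma(S^{(i)})$ enters --- it does not in your (or the lemma's) bookkeeping --- or you should explicitly flag that the constant $C$ in the theorem statement is inconsistent with Lemma \ref{lemma useful 2}; note the corollary's special case $k=1$, ${\bf p}_1=(p_1\cdots p_d,1,\ldots,1)$ cannot distinguish the two because there $\gamma(S^{(i)})=1$ for all $i\ne j$. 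A smaller point worth a sentence: the intersection of a layer with the full box $\mathcal{N}_{N_1\times\cdots\times N_d}$ is generally not an initial segment in the $\prec^{(j)}$ order, so your claim that each layer factor is the Ising probability of a contiguous block tacitly replaces the box by the directional truncation $\mathcal{L}^{(j)}$ (only the $j$-th coordinate constrained); you should state that substitution explicitly, since otherwise the $\phi$ you feed into the lemma is not the marginal you started from.
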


\section{Free energy functions and large deviation principle}
In this section, we obtain the generalization of Theorem \ref{thm 1.2} (Theorem \ref{thm main 1d 2.5multiple}), and we consider two types of generalizations of Theorem \ref{thm 1.3} (Theorems \ref{thm general} and \ref{theorem main Nd 2.5multiple}).

\subsection{Generalization of 2-multiple Hamiltonian to $S^G_N$ on $\mathbb{N}$}
Let $k\geq 1$, $p_1,p_2,...,p_k$ be co-primes, and $G=\langle p_1,p_2,...,p_k \rangle=\{1=\ell_1<\ell_2< \cdots   \}$ be a semigroup generated by $p_1,p_2,...,p_k$. Denote $\gamma(G)=\sum_{i=1}^\infty \frac{1}{\ell_i}$. The LDP of $S^G_N=\sum_{i=1}^N \sigma_{\ell_{j(i)}} \sigma_{\ell_{j(i)+1}}$, $j(i)$ is the unique number such that $i=i' \ell_j$ and $p_n \nmid i'$ for all $1\leq n \leq k$, is presented in the following theorem.

\begin{theorem}\label{thm main 1d 2.5multiple}
	The following statements hold true.
	\item[1.]The explicit expression of the free energy function associated to the multiple sum $S^G_N$ is
	\begin{equation*}
		\begin{aligned}
			F_r(\beta)=\frac{1+\gamma(G)^{-1}}{2}\log (r(1-r))+\gamma(G)^{-1}\log|v^T\cdot e_+|^2+\log\Lambda_++\mathcal{G}(\beta),
		\end{aligned}
	\end{equation*}
	where 
	\begin{equation*}
		\begin{aligned}
			\mathcal{G}(\beta)=\gamma(G)^{-1}\sum_{k=1}^{\infty}\left( \frac{1}{\ell_k} -\frac{1}{\ell_{k+1}} \right)\log\left(1+\left( \frac{2\cosh(h)}{|v^T\cdot e_+|^2}-1 \right)\left(\frac{\Lambda_-}{\Lambda_+} \right)^k\right).	
		\end{aligned}
	\end{equation*}
	\item[2.]The function $F_r(\beta)$ is differentiable with respect to $\beta \in \mathbb{R}$.
	\item[3.]The multiple average $\frac{S^G_N}{N}$ satisfies a LDP with rate function given by
	\begin{equation*}
		I_{r}(x)=\sup_{\beta \in \mathbb{R}}\left( \beta x-F_r(\beta) \right).
	\end{equation*}
	Furthermore, if $(F_r)'(\eta)=y$, then $I_{r}(y)=\eta y- F_r(\eta)$. 
\end{theorem}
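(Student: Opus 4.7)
The plan is to decompose $\mathbb{N}$ along the semigroup partition of Lemma \ref{lemma Nd 2.4} (specialized to $d=1$), reduce the problem to independent finite Ising chains whose partition functions I compute by transfer matrix, assemble the limit using the density counts of Lemma \ref{lemma Nd decomposition 2.5 multiple}, and finally apply the G\"artner--Ellis theorem. The skeleton follows \cite{carinci2012nonconventional}, and the novelty is to handle varying chain lengths arising from a general multiplicative semigroup $G$.

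First, with $\mathcal{I}_G = \{n \in \mathbb{N} : p_j \nmid n \text{ for every } 1\leq j \leq k\}$ I have $\mathbb{N} = \bigsqcup_{i' \in \mathcal{I}_G} i'G$. Setting $\tau^{i'}_m = \sigma_{i'\ell_m}$ rewrites
\[
S^G_N(\sigma) = \sum_{i' \in \mathcal{I}_G,\, i'\leq N}\; \sum_{m :\, i'\ell_m \leq N} \tau^{i'}_m \tau^{i'}_{m+1}.
\]
For $i' \in \mathcal{K}_{N;M}$ (the $d=1$ version of Definition \ref{definition:2.5}, so $i'\ell_M \leq N < i'\ell_{M+1}$) the corresponding layer contributes $M$ bonds among $M+1$ spins, and distinct layers are independent under $\mathbb{P}_r$, so the moment-generating function factorizes over $\mathcal{I}_G \cap \{1,\ldots,N\}$.

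Absorbing the Bernoulli weights into an effective field $h = \tfrac12\log(r/(1-r))$ and a per-spin factor $\sqrt{r(1-r)}$, each chain contributes a standard finite Ising partition function with $2\times 2$ transfer matrix, eigenvalues $\Lambda_\pm$, and boundary vector $v = (e^{h/2},e^{-h/2})^T$. Spectral decomposition of $v$ together with $|v^T\!\cdot e_+|^2 + |v^T\!\cdot e_-|^2 = 2\cosh h$ yields
\[
\log Z^{(i')}(\beta,r) = \tfrac{M+1}{2}\log(r(1-r)) + M\log\Lambda_+ + \log|v^T\!\cdot e_+|^2 + \log\!\left(1 + \left(\tfrac{2\cosh h}{|v^T\!\cdot e_+|^2}-1\right)\left(\tfrac{\Lambda_-}{\Lambda_+}\right)^M\right).
\]
Summing $\log Z^{(i')}$ over $i' \in \mathcal{I}_G \cap \{1,\ldots,N\}$ and dividing by $N$ produces four pieces. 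The $M\log\Lambda_+$ piece sums to $\log\Lambda_+$ exactly, since $\sum_{i'}M_{i'}=N$ (the chains partition $\{1,\ldots,N\}$). The $\log|v^T\!\cdot e_+|^2$ piece contributes $\gamma(G)^{-1}\log|v^T\!\cdot e_+|^2$ via $|\mathcal{I}_G \cap \{1,\ldots,N\}|/N \to \prod_j(1-1/p_j) = \gamma(G)^{-1}$ (inclusion--exclusion). The $\log(r(1-r))$ piece splits into $\tfrac12$ (interior spins) and $\tfrac12\gamma(G)^{-1}$ (one boundary spin per chain), giving the coefficient $(1+\gamma(G)^{-1})/2$. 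The correction piece, grouped by chain length via Lemma \ref{lemma Nd decomposition 2.5 multiple}(2)--(3) in 1D, converges to $\gamma(G)^{-1}\sum_M (1/\ell_M - 1/\ell_{M+1})\log(1+(\tfrac{2\cosh h}{|v^T\!\cdot e_+|^2}-1)(\Lambda_-/\Lambda_+)^M) = \mathcal{G}(\beta)$.

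For part 2, $|\Lambda_-/\Lambda_+| < 1$ for every real $\beta$ (since $\sinh^2 h + e^{-4\beta} > 0$), so each summand of $\mathcal{G}(\beta)$ and its derivative are dominated by geometric terms in $(\Lambda_-/\Lambda_+)^M$ uniformly on compact $\beta$-intervals, legitimizing term-by-term differentiation. For part 3, $F_r$ is finite and differentiable on all of $\mathbb{R}$, hence essentially smooth, so G\"artner--Ellis yields the LDP with rate function $I_r(x) = \sup_{\beta}(\beta x - F_r(\beta))$, and Legendre duality gives $I_r(y) = \eta y - F_r(\eta)$ whenever $F_r'(\eta) = y$. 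The main obstacle will be the coefficient bookkeeping in Step 3: producing $(1+\gamma(G)^{-1})/2$ rather than $1/2$ requires carefully separating the interior spins (at indices $i'\ell_1,\ldots,i'\ell_{M_{i'}}$, summing to $N$) from the single boundary spin $\sigma_{i'\ell_{M_{i'}+1}}$ per chain that lies outside $\{1,\ldots,N\}$, and the exchange of $\lim_N$ with the infinite sum requires verifying an exponential growth bound $Z^{(i')} \leq C^{M+1}$ in order to apply Lemma \ref{lemma Nd decomposition 2.5 multiple}(3).
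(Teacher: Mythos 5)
Your proposal is correct and follows essentially the same route as the paper: decompose $\mathbb{N}$ into independent layers $i'G$, evaluate each finite chain by the $2\times 2$ transfer matrix to get $(r(1-r))^{(M+1)/2}Z(\beta,h,M+1)$, group chains by length using the density $\gamma(G)^{-1}(1/\ell_M-1/\ell_{M+1})$ from the $d=1$ case of Lemma \ref{lemma Nd decomposition 2.5 multiple}, and conclude via G\"artner--Ellis. The only (harmless) difference is bookkeeping: you obtain the coefficients of $\log\Lambda_+$ and $\log(r(1-r))$ from the exact counts $\sum_{i'}M_{i'}=N$ and $|\mathcal{I}_G\cap\{1,\dots,N\}|/N\to\gamma(G)^{-1}$, whereas the paper sums the series $\sum_k(1/\ell_k-1/\ell_{k+1})k=\gamma(G)$ and $\sum_k(1/\ell_k-1/\ell_{k+1})(k+1)=1+\gamma(G)$.
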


\begin{proof}
	\item[1.]By the $\mathbb{N}^d$ version of Lemma 2.7 in \cite{ban2021entropy} and the similar argument of Theorem 3.2 in \cite{ban2021LDP}, we obtain
	\begin{equation*}
		\begin{aligned}
			F_r(\beta)&=B\sum_{k=1}^{\infty} \left( \frac{1}{\ell_k} -\frac{1}{\ell_{k+1}} \right) \log (r(1-r))^{\frac{k+1}{2}}Z(\beta,h,k+1)\\
			&=B\sum_{k=1}^{\infty} \left( \frac{1}{\ell_k} -\frac{1}{\ell_{k+1}} \right) \log (r(1-r))^{\frac{k+1}{2}}\\
			&+B\sum_{k=1}^{\infty} \left( \frac{1}{\ell_k} -\frac{1}{\ell_{k+1}} \right)\log\left(|v^T\cdot e_+|^2 \Lambda_+^k+(2\cosh(h)-|v^T\cdot e_+|^2) \Lambda_-^k\right)\\
			&=B\frac{1+\gamma(G)}{2}\log (r(1-r))+B\log|v^T\cdot e_+|^2+B\gamma(G)\log\Lambda_++\mathcal{G}(\beta)\\
			&=\frac{1+\gamma(G)^{-1}}{2}\log (r(1-r))+\gamma(G)^{-1}\log|v^T\cdot e_+|^2+\log\Lambda_++\mathcal{G}(\beta),
		\end{aligned}
	\end{equation*}
	where the constant $B=\prod_{i=1}^k \left(1-\frac{1}{p_i}  \right)=\gamma(G)^{-1}$,
	 \begin{equation*}
	 Z(\beta,h,k+1)=v^T \left[\begin{matrix}
	 	e^{\beta+h}&e^{-\beta}\\
	 	e^{-\beta}&e^{\beta-h}
	 \end{matrix} \right]^k v
	 \end{equation*}
	 and
	\begin{equation*}
		\begin{aligned}
			\mathcal{G}(\beta)=\gamma(G)^{-1}\sum_{k=1}^{\infty}\left( \frac{1}{\ell_k} -\frac{1}{\ell_{k+1}} \right)\log\left(1+\left( \frac{2\cosh(h)}{|v^T\cdot e_+|^2}-1 \right)\left(\frac{\Lambda_-}{\Lambda_+} \right)^k\right).	
		\end{aligned}
	\end{equation*}
	\item[2.]The proof is similar to the (2) of Theorem 3.2 in \cite{ban2021LDP}. More pricisely, we are going to show the sum
	\begin{equation*}
		\sum_{k=1}^{\infty}\left( \frac{1}{\ell_k} -\frac{1}{\ell_{k+1}} \right)\left[\log\left(1+\left( \frac{2\cosh(h)}{|v^T\cdot e_+|^2}-1 \right)\left(\frac{\Lambda_-}{\Lambda_+} \right)^k\right)\right]'
	\end{equation*}
	coverge uniformly with respect to $\beta \in \mathbb{R}$, where the notation $'$ stays for the derivative with respect to $\beta$. Then, we apply the same reasoning on
	\begin{equation*}
		\left[\log\left(1+\left( \frac{2\cosh(h)}{|v^T\cdot e_+|^2}-1 \right)\left(\frac{\Lambda_-}{\Lambda_+} \right)^k\right)\right]'
	\end{equation*}
	and then apply the Weierstrass M-test, and the proof is complete.
	\item[3.]The proof is a direct consequence of Theorem \ref{thm main 1d 2.5multiple} (2) and G\"{a}rtner-Ellis Theorem \cite{ban2021LDP}. 
\end{proof}

\begin{corollary}
	\item[1.]For $k=1$, $p_1 \geq 1$, $G=\langle p_1 \rangle=\{ p_1^{i-1} : i\in \mathbb{N}  \}$ and $\gamma(S)=\frac{p_1}{p_1-1}$. The explicit expression of the free energy function associated to the multiple sum $S^G_N$ is
	\begin{equation*}
		\begin{aligned}
			F_r(\beta)=\frac{2p_1-1}{2p_1}\log (r(1-r))+\frac{2(p_1-1)}{p_1} \log|v^T\cdot e_+|+\log\Lambda_++\mathcal{G}(\beta),
		\end{aligned}
	\end{equation*}
	where 
	\begin{equation*}
		\begin{aligned}
			\mathcal{G}(\beta)=\frac{p_1-1}{p_1}\sum_{k=1}^{\infty}\frac{p_1-1}{p_1^k}  \log\left(1+\left( \frac{2\cosh(h)}{|v^T\cdot e_+|^2}-1 \right)\left(\frac{\Lambda_-}{\Lambda_+} \right)^k\right).	
		\end{aligned}
	\end{equation*}		
	\item[2.]In addition, $p_1=2$, $G=\langle 2 \rangle=\{ \ell_i=2^{i-1}: i\in \mathbb{N}  \}$ and $\gamma(S)=2$. The explicit expression of the free energy function associated to the multiple sum $S^G_N$ is
	\begin{equation*}
		\begin{aligned}
			F_r(\beta)=\frac{3}{4}\log (r(1-r))+\log|v^T\cdot e_+|+\log\Lambda_++\mathcal{G}(\beta),
		\end{aligned}
	\end{equation*}
	where 
	\begin{equation*}
		\begin{aligned}
			\mathcal{G}(\beta)=\frac{1}{2}\sum_{k=1}^{\infty}\frac{1}{2^k}  \log\left(1+\left( \frac{2\cosh(h)}{|v^T\cdot e_+|^2}-1 \right)\left(\frac{\Lambda_-}{\Lambda_+} \right)^k\right).	
		\end{aligned}
	\end{equation*}
	Which is coincides with the resilt in Theorem \ref{thm 1.2}.
\end{corollary}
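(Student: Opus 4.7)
The plan is to derive both items of the corollary by direct specialization of the general formula in Theorem \ref{thm main 1d 2.5multiple}. No new analytic work is needed: once the combinatorial quantities $\ell_k$ and $\gamma(G)$ are evaluated explicitly for the singly generated semigroup $G=\langle p_1\rangle$, the corollary reduces to algebraic simplification of the already established formula for $F_r(\beta)$.

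First I would compute the key inputs for $G=\langle p_1\rangle$. Here the enumeration is $\ell_i=p_1^{i-1}$, so the geometric series gives
\begin{equation*}
\gamma(G)=\sum_{i=1}^{\infty}\frac{1}{p_1^{i-1}}=\frac{p_1}{p_1-1},\qquad \gamma(G)^{-1}=\frac{p_1-1}{p_1},
\end{equation*}
and the telescoping differences become
\begin{equation*}
\frac{1}{\ell_k}-\frac{1}{\ell_{k+1}}=\frac{1}{p_1^{k-1}}-\frac{1}{p_1^{k}}=\frac{p_1-1}{p_1^{k}}.
\end{equation*}
Substituting these into the prefactors of Theorem \ref{thm main 1d 2.5multiple}(1), one gets $\tfrac{1+\gamma(G)^{-1}}{2}=\tfrac{2p_1-1}{2p_1}$ in front of $\log(r(1-r))$, and $\gamma(G)^{-1}\log|v^{T}\!\cdot e_+|^{2}=\tfrac{2(p_1-1)}{p_1}\log|v^{T}\!\cdot e_+|$ for the second term. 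For the series $\mathcal{G}(\beta)$, multiplying $\gamma(G)^{-1}=\tfrac{p_1-1}{p_1}$ by the term $(p_1-1)/p_1^{k}$ yields the stated factor $\tfrac{p_1-1}{p_1}\cdot\tfrac{p_1-1}{p_1^{k}}$. This establishes item (1).

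For item (2), I would just further specialize by setting $p_1=2$. Then $\gamma(G)=2$ and $\gamma(G)^{-1}=1/2$, so $\tfrac{2p_1-1}{2p_1}=\tfrac{3}{4}$, the coefficient $\tfrac{2(p_1-1)}{p_1}$ reduces to $1$ (absorbing the square on $|v^{T}\!\cdot e_+|$ into a single power), and the series prefactor becomes $\tfrac{1}{2}\cdot\tfrac{1}{2^{k}}$. These are precisely the coefficients appearing in Theorem \ref{thm 1.2}, so the identity with the Carinci--Chazottes--Redig formula is immediate.

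There is essentially no obstacle: both items are book-keeping specializations and the only thing to double-check is that the exponent manipulation $\gamma(G)^{-1}\log|v^{T}\!\cdot e_+|^{2}=\tfrac{2(p_1-1)}{p_1}\log|v^{T}\!\cdot e_+|$ is presented correctly (the factor of $2$ from pulling the square out of the logarithm must match the author's displayed formula). A short sentence noting this would complete the proof.
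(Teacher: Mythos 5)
Your proposal is correct and is exactly the argument the paper intends: the corollary is a direct specialization of Theorem \ref{thm main 1d 2.5multiple} obtained by substituting $\ell_i=p_1^{i-1}$, $\gamma(G)=\frac{p_1}{p_1-1}$, and $\frac{1}{\ell_k}-\frac{1}{\ell_{k+1}}=\frac{p_1-1}{p_1^k}$, followed by the bookkeeping on the factor of $2$ from $\log|v^T\cdot e_+|^2$. All the algebra checks out, including the further specialization to $p_1=2$ recovering Theorem \ref{thm 1.2}.
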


\begin{example}
		\begin{figure}
	\includegraphics[scale=0.3]{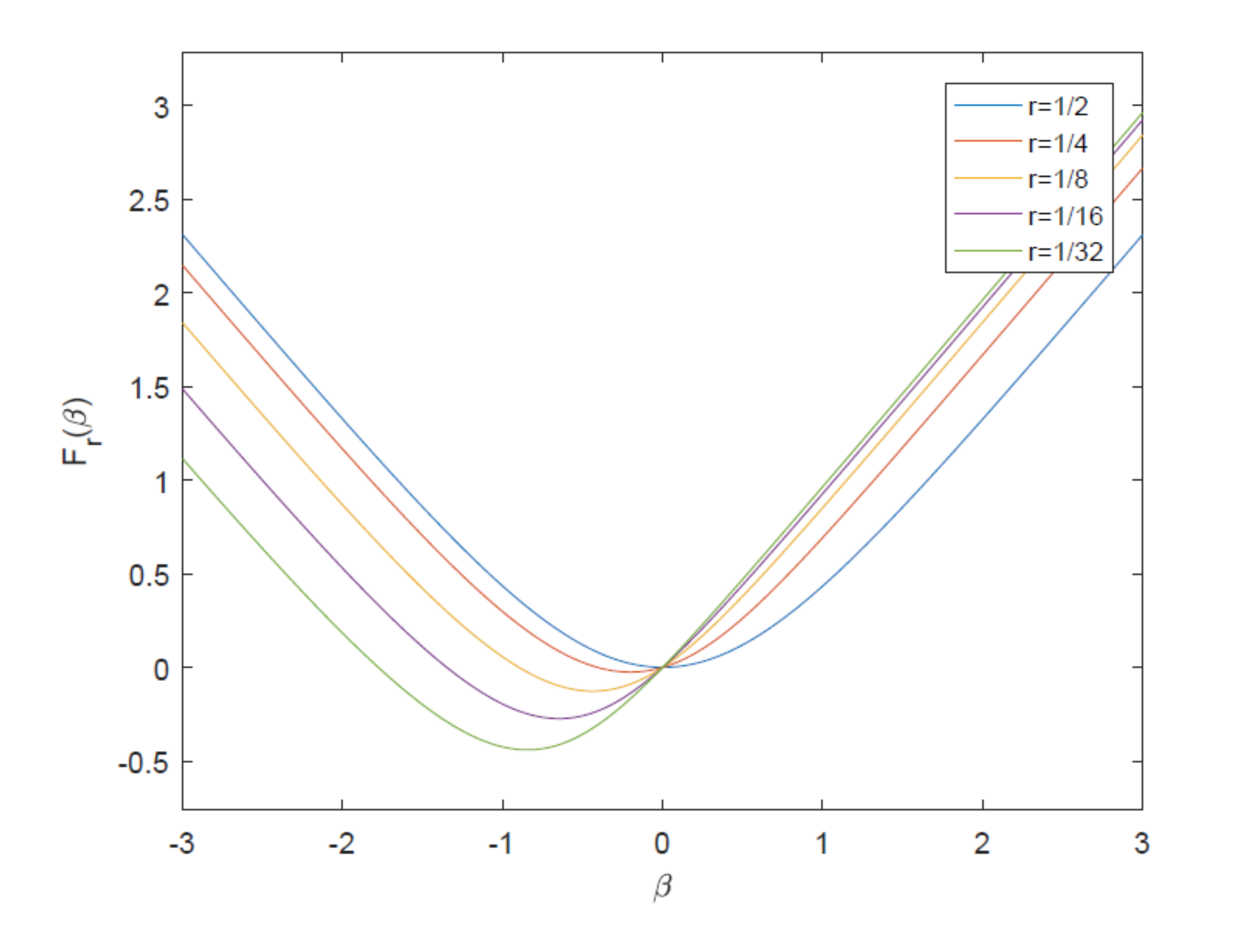}
	\caption{Plot of the $F_r(\beta)$ with $p_i, 1\leq i \leq 5$ for different $r$ values.}
	\label{fig 1}
\end{figure}	
	Figure \ref{fig 1} illustrates the free energy function for different $r\in (0,1)$ which is obtained from Theorem \ref{thm main 1d 2.5multiple} by truncating the sum to the first 100 terms. 
	The graph is obtained for the case $d = 1$ with $p_1=2,p_2=3,p_3=5,p_4=7,p_5=11$, $G=\langle 2,3,5,7,11 \rangle$ and $\gamma(G)^{-1}=\left( 1-\frac{1}{2} \right) \left( 1-\frac{1}{3} \right)\left( 1-\frac{1}{5} \right)\left( 1-\frac{1}{7} \right)\left( 1-\frac{1}{11} \right)=\frac{16}{77}$.
\end{example}

\subsection{Generalization of 2-multiple Hamiltonian to $S^{\bf G}_{N_1\times \cdots \times N_d}$ on $\mathbb{N}^d$}\label{section 4.2}
 Recall that $S^{(j)}=\langle p_{1j}, ..., p_{kj} \rangle=\{ 1=\ell^{(j)}_1 < \ell^{(j)}_2 <\cdots   \}$ is the semigroup generated by $p_{1j}, ..., p_{kj}$. And ${\bf G}=\langle{\bf p}_1,..., {\bf p}_k\rangle=\{ {\bf 1}= l^{(j)}_1 \prec^{(j)} l^{(j)}_2 \prec^{(j)} \cdots \}$ be the semigroup generated by ${\bf p}_1, ..., {\bf p}_k$ with an order $\prec^{(j)}$ such that ${\bf G}$ follows $S^{(j)}$ for each $1 \leq j \leq d$. The LDP of $S^{\bf G}_{N_1 \times \cdots \times N_d}=\sum_{i_i=1}^{N_1} \cdots \sum_{i_d=1}^{N_d} \sigma_{l^{(j)}_{\bf j(i)}} \sigma_{l^{(j)}_{{\bf j(i)}+1}}$, ${\bf j(i)}$ is the unique number such that ${\bf i}=(k_1, ...,k_d)\cdot l^{(j)}_{\bf j(i)}$ with $(k_1, ...,k_d)\in \mathcal{I}_{{\bf p}_1,..., {\bf p}_k}$, is described by the following energy function. 

The \emph{directional free energy function} is defined as following. For $1\leq j \leq d$, 
\[
F_r(\beta):=\lim_{N_1,...,N_d\rightarrow \infty}\log \mathbb{E}_r \left( e^{ \beta\sum_{i_1=1}^{N_1} \cdots \sum_{i_d=1}^{N_d} \sigma_{l^{(j)}_{\bf j(i)}} \sigma_{l^{(j)}_{{\bf j(i)}+1}}}\right).
\]
The following result takes the sum over all elements ${\bf i}$ if it is less than the maximum element on its layer intersection with the $N_1 \times \cdots \times N_d $ lattice (in the $\prec^{(j)}$ sense). That is, we take the sum over all ${\bf i}\in  \mathcal{M}_{{\bf p}_1,...,{\bf p}_k}({\bf j(i)}) $ if 
\begin{equation*}
{\bf i}  \prec^{(j)} \max \left\{ {\bf i}' : {\bf i}'\in  \mathcal{M}_{{\bf p}_1,...,{\bf p}_k}({\bf j(i)}) \cap \mathcal{N}_{N_1 \times \cdots \times N_d}   \right\}.
\end{equation*}

\begin{theorem}\label{thm general}
	For $1\leq j \leq d$, the explicity formula of the directional energy function associated to the multiple sum $S^{\bf G}_{N_1\times\cdots \times N_d}$ converges to 
	\begin{equation*}
	F_r(\beta)=\prod_{i=1}^{k} \left(1-\frac{1}{p_{i1}\cdots p_{id}}\right)\sum_{k_1,...,k_d=1}^{\infty} \prod_{i=1}^{d}\left( \frac{1}{\ell^{(i)}_{k_i}} -\frac{1}{\ell^{(i)}_{k_i+1}} \right)\log (r(1-r))^{\frac{b_{k_1,...,k_d}+1}{2}}Z(\beta,h,b_{k_1,...,k_d}+1),
	\end{equation*}
	where $b_{k_1,...,k_d}$ is the number of the elements in ${\bf G}$ less than or equal to the maximum element (with the order $\prec^{(j)}$) in the $\ell^{(1)}_{k_1}\times \cdots \times \ell^{(d)}_{k_d}$ lattice.
\end{theorem}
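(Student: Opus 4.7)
The plan is to mirror the proof of the one-dimensional case, Theorem \ref{thm main 1d 2.5multiple}, replacing the $1$D counting of sublattices by its $\mathbb{N}^d$ analogue, Lemma \ref{lemma Nd decomposition 2.5 multiple}. First I would decompose $\mathcal{N}_{N_1 \times \cdots \times N_d}$ into independent sublattices via Lemma \ref{lemma Nd 2.4}: every point lies in a unique $\mathcal{M}_{{\bf p}_1,\ldots,{\bf p}_k}({\bf i})$ with ${\bf i} \in \mathcal{I}_{{\bf p}_1,\ldots,{\bf p}_k}$. Ordering each such sublattice by $\prec^{(j)}$ turns the restriction of $S^{\bf G}_{N_1\times\cdots\times N_d}$ to that sublattice into a nearest-neighbour Ising chain, with only those edges whose both endpoints lie in the lattice being counted, as emphasised in the convention preceding the statement.

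Because the product Bernoulli measure $\mathbb{P}_r$ makes spins on disjoint sublattices independent, the exponential moment factorises over the starting points ${\bf i}\in \mathcal{I}_{{\bf p}_1,\ldots,{\bf p}_k}\cap\mathcal{N}_{N_1\times\cdots\times N_d}$, and each single-chain factor is a finite Ising partition function in magnetic field $h=\tfrac12\log(r/(1-r))$. By the standard transfer-matrix computation already used in Theorems \ref{thm 1.2} and \ref{thm main 1d 2.5multiple}, a chain of $b+1$ spins contributes exactly $(r(1-r))^{(b+1)/2}Z(\beta,h,b+1)$.

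Next I would group the starting points by the sets $\mathcal{K}_{N_1\times\cdots\times N_d;k_1,\ldots,k_d}$ of Definition \ref{definition:2.5}. By the defining property of $b_{k_1,\ldots,k_d}$, every ${\bf i}$ in such a set generates a sublattice of the same chain length, so taking logarithms and dividing by $N_1\cdots N_d$ gives
\[
\frac{\log\mathbb{E}_r\bigl(e^{\beta S^{\bf G}_{N_1\times\cdots\times N_d}}\bigr)}{N_1\cdots N_d}
=\sum_{k_1,\ldots,k_d\geq 1}\frac{|\mathcal{K}_{N_1\times\cdots\times N_d;k_1,\ldots,k_d}|}{N_1\cdots N_d}\log\bigl[(r(1-r))^{(b_{k_1,\ldots,k_d}+1)/2}Z(\beta,h,b_{k_1,\ldots,k_d}+1)\bigr].
\]
Parts (1)--(2) of Lemma \ref{lemma Nd decomposition 2.5 multiple} evaluate the asymptotic density of $|\mathcal{K}_{N_1\times\cdots\times N_d;k_1,\ldots,k_d}|/(N_1\cdots N_d)$ as $\prod_i(1-1/(p_{i1}\cdots p_{id}))\prod_j(1/\ell^{(j)}_{k_j}-1/\ell^{(j)}_{k_j+1})$, and part (3) legitimises exchanging the limit with the infinite sum, yielding the closed form in the statement.

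The step I expect to be the main obstacle is verifying the geometric-growth hypothesis of Lemma \ref{lemma Nd decomposition 2.5 multiple}(3), namely that $\log[(r(1-r))^{(b+1)/2}Z(\beta,h,b+1)]$ is dominated by some $C^{k_1\cdots k_d}$. Since $|\log Z(\beta,h,n)|$ grows at most linearly in $n$, this reduces to a uniform bound of the form $b_{k_1,\ldots,k_d}\leq C' k_1\cdots k_d$, which in turn requires comparing the joint order $\prec^{(j)}$ on ${\bf G}$ with the coordinate semigroups $S^{(j)}$. The crudest estimate simply counts all elements of ${\bf G}$ lying in the box $[1,\ell^{(1)}_{k_1}]\times\cdots\times[1,\ell^{(d)}_{k_d}]$ and should suffice; it is precisely this direction-dependent counting that forces $F_r$ to depend on $j$, justifying the adjective ``directional'' attached to $F_r$ in Section \ref{section 4.2}.
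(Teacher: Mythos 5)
Your proposal follows essentially the same route as the paper: decompose $\mathbb{N}^d$ via Lemma \ref{lemma Nd 2.4}, factorize the exponential moment over the independent sublattices under $\mathbb{P}_r$, evaluate each chain by the transfer matrix, and invoke Lemma \ref{lemma Nd decomposition 2.5 multiple} for the densities and the limit--sum exchange. The paper disposes of the domination step you flag as the main obstacle with the sharper observation $b_{k_1,\ldots,k_d}\le k_j$ (project ${\bf G}$ injectively onto its $j$-th coordinates, which enumerate $S^{(j)}$, so every element $\prec^{(j)}$-below the maximal in-box element has index at most $k_j$), but your cruder bound $b_{k_1,\ldots,k_d}\le C' k_1\cdots k_d$ equally satisfies the hypothesis $a_{M_1,\ldots,M_d}\le C^{M_1\cdots M_d}$ of Lemma \ref{lemma Nd decomposition 2.5 multiple}(3), so the argument goes through either way.
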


\begin{proof}
	The proof is directly by the Lemmas \ref{lemma Nd 2.4} and \ref{lemma Nd decomposition 2.5 multiple} with the observation $\left| b_{k_1,...,k_d}  \right| \leq k_j$.
\end{proof}

Due to the explicity expression of Theorem \ref{thm general} is difficult to obtain, we consider the following type directional free energy function which takes the sum over all ${\bf i}\in  \mathcal{I}_{{\bf p}_1,...,{\bf p}_k}\cap \mathcal{N}_{N_1\times\cdots \times N_d}$ layers with the layer members ${\bf i}\cdot {\bf p}_1^{\ell_1} \cdots {\bf p}_k^{\ell_k}$ satisfy $i_j p_{1j}^{\ell_1}\cdots p_{kj}^{\ell_k} \leq N_j$. Then we have following result.

\begin{theorem}\label{theorem main Nd 2.5multiple}
	For $1\leq j \leq d$,
	\item[1.]The explicity expression of the directional energy function associated to the multiple sum $S^{\bf G}_{N_1\times\cdots \times N_d}$ is
	\begin{equation*}
		\begin{aligned}
			F_r(\beta)= \frac{C+C\gamma(S^{(j)})}{2}\log (r(1-r))+C\log|v^T\cdot e_+|^2 +C\gamma(S^{(j)})\log\Lambda_++\mathcal{G}(\beta),
		\end{aligned}
	\end{equation*}
	where $C=\prod_{i=1}^{k} \left(1-\frac{1}{p_{i1}\cdots p_{id}}\right)  \prod_{1\leq i\neq j \leq d}\gamma(S^{(i)}) $ and
	\begin{equation*}
		\mathcal{G}(\beta)=C\sum_{k=1}^{\infty}\left( \frac{1}{\ell^{(j)}_k} -\frac{1}{\ell^{(j)}_{k+1}} \right)\log\left(1+\left( \frac{2\cosh(h)}{|v^T\cdot e_+|^2}-1 \right)\left(\frac{\Lambda_-}{\Lambda_+} \right)^k\right).
	\end{equation*}
	\item[2.]The function $F_r(\beta)$ is differentiable with respect to $\beta \in \mathbb{R}$.
	\item[3.]The multiple average $\frac{S^{\bf G}_{N_1 \times \cdots \times N_d}}{N_1 \cdots N_d}$ satisfies a LDP with rate function given by
	\begin{equation*}
		I_{r}(x)=\sup_{\beta \in \mathbb{R}}\left( \beta x-F_r(\beta) \right).
	\end{equation*}
	Furthermore, if $(F_r)'(\eta)=y$, then $I_{r}(y)=\eta y- F_r(\eta)$. 
\end{theorem}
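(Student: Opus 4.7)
The plan is to follow the strategy of Theorem \ref{thm main 1d 2.5multiple} and lift it to $\mathbb{N}^{d}$ using the multidimensional decomposition established in Lemmas \ref{lemma Nd 2.4} and \ref{lemma useful 2}. For part (1), I would exploit the fact that under the product Bernoulli measure $\mathbb{P}_{r}$ the spins $\sigma_{\bf i}$ are i.i.d., so the moment generating function
\[
\mathbb{E}_{r}\!\left(e^{\beta S^{\bf G}_{N_{1}\times\cdots\times N_{d}}}\right)
\]
factorizes over the pairwise disjoint sublattices $\mathcal{M}_{{\bf p}_{1},\ldots,{\bf p}_{k}}({\bf i})$, ${\bf i}\in\mathcal{I}_{{\bf p}_{1},\ldots,{\bf p}_{k}}\cap\mathcal{N}_{N_{1}\times\cdots\times N_{d}}$, supplied by Lemma \ref{lemma Nd 2.4}. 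On each sublattice the partial sum $\sum \sigma_{l^{(j)}_{\bf j(i)}}\sigma_{l^{(j)}_{\bf j(i)+1}}$ restricted to the chain indexed by $\mathcal{L}^{(j)}_{N_{1}\times\cdots\times N_{d}}({\bf i})$ is exactly a finite nearest-neighbor 1D Ising Hamiltonian of length $M_{j}({\bf i})$, whose generating function under i.i.d.~Bernoulli$(r)$ equals $(r(1-r))^{(M_{j}+1)/2}\,Z(\beta,h,M_{j}+1)$ with $h=\tfrac{1}{2}\log(r/(1-r))$. Diagonalizing the transfer matrix yields $Z(\beta,h,k+1)=|v^{T}\!\cdot e_{+}|^{2}\Lambda_{+}^{k}+(2\cosh h-|v^{T}\!\cdot e_{+}|^{2})\Lambda_{-}^{k}$, exactly as in the 1D computation.

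Taking the logarithm, dividing by $N_{1}\cdots N_{d}$, and applying Lemma \ref{lemma useful 2} with the test function $\phi(M_{j})=\log\!\bigl[(r(1-r))^{(M_{j}+1)/2}Z(\beta,h,M_{j}+1)\bigr]$ (the polynomial-growth hypothesis is satisfied since $Z$ grows at most like $\Lambda_{+}^{M_{j}}$) converts the sum over $\mathcal{I}_{{\bf p}_{1},\ldots,{\bf p}_{k}}$ into the weighted series in $k$ with weights $\tfrac{1}{\ell^{(j)}_{k}}-\tfrac{1}{\ell^{(j)}_{k+1}}$. The prefactor $C=\prod_{i}(1-1/(p_{i1}\cdots p_{id}))\prod_{i\ne j}\gamma(S^{(i)})$ comes out of Lemma \ref{lemma useful 2} after collapsing the $k_{i}$-sums for $i\ne j$ against $\gamma(S^{(i)})=\sum_{k}(1/\ell^{(i)}_{k}-1/\ell^{(i)}_{k+1})^{-1}\cdot\ldots$; splitting $\log Z$ into the Boltzmann prefactor, the leading $\Lambda_{+}^{k}$ contribution, and the correction $\log(1+(\tfrac{2\cosh h}{|v^{T}\!\cdot e_{+}|^{2}}-1)(\Lambda_{-}/\Lambda_{+})^{k})$ produces the four-term decomposition stated in part (1). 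For part (2), only $\mathcal{G}(\beta)$ requires care: I would mimic Theorem \ref{thm main 1d 2.5multiple}(2) and differentiate termwise, using Weierstrass M-test with the geometric bound $|(\Lambda_{-}/\Lambda_{+})^{k}|\le\rho^{k}$ for $\rho<1$ on compact $\beta$-intervals, together with the summability of $\tfrac{1}{\ell^{(j)}_{k}}-\tfrac{1}{\ell^{(j)}_{k+1}}$, to justify the interchange. Part (3) is then an immediate application of the Gärtner–Ellis theorem once $F_{r}$ is shown differentiable, hence essentially smooth and lower semicontinuous.

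The main obstacle I anticipate is the bookkeeping in part (1): one must verify that the moment generating function really factors over the sublattices indexed by $\mathcal{I}_{{\bf p}_{1},\ldots,{\bf p}_{k}}$ in the \emph{directional} sense (only along $\prec^{(j)}$), that the length of the truncated chain $\mathcal{L}^{(j)}_{N_{1}\times\cdots\times N_{d}}({\bf i})$ is precisely the index $M_{j}$ needed for Lemma \ref{lemma useful 2}, and that the boundary layers (those lying near the edge of $\mathcal{N}_{N_{1}\times\cdots\times N_{d}}$) contribute negligibly. This last point is exactly why the \emph{directional} restriction is imposed and why, as the authors emphasize, computing $F_{r}(\beta)$ along non-coordinate directions is much harder: the chain lengths then depend nonlinearly on $(N_{1},\ldots,N_{d})$ and the clean factorization $\prod_{i\ne j}\gamma(S^{(i)})$ breaks down.
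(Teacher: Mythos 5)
Your proposal follows essentially the same route as the paper: decompose $\mathbb{N}^d$ into independent sublattices via Lemma \ref{lemma Nd 2.4}, factorize the moment generating function over these chains, evaluate each chain by the $1$D transfer matrix to get $(r(1-r))^{(k+1)/2}Z(\beta,h,k+1)$, pass to the limit with the counting Lemma \ref{lemma Nd decomposition 2.5 multiple} (equivalently Lemma \ref{lemma useful 2}), and then handle differentiability by termwise Weierstrass M-test and the LDP by G\"artner--Ellis, exactly as the paper does by importing the argument of Theorem 3.2 of \cite{ban2021LDP}. The only blemish is your garbled identity for $\gamma(S^{(i)})$ when collapsing the $k_i$-sums for $i\neq j$, but this bookkeeping step is equally terse in the paper's own proof, so the approaches coincide.
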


\begin{proof}
	\item[1.]By Lemmas \ref{lemma Nd 2.4} and Lemma \ref{lemma Nd decomposition 2.5 multiple} with the similar process of Theorem 3.2 in \cite{ban2021LDP}, we have 
	\begin{equation*}
		\begin{aligned}
			F_r(\beta)
			&=\lim_{N_1,...,N_d\rightarrow \infty}\log \mathbb{E}_r \left( e^{ \beta\sum_{i_1=1}^{N_1} \cdots \sum_{i_d=1}^{N_d} \sigma_{l^{(j)}_{\bf j(i)}} \sigma_{l^{(j)}_{{\bf j(i)}+1}}}\right)\\
			&=\prod_{i=1}^{k} \left(1-\frac{1}{p_{i1}\cdots p_{id}}\right)\sum_{k_1=1}^{\infty}\cdots \sum_{k_d=1}^{\infty} \prod_{i=1}^{d}\left( \frac{1}{\ell^{(i)}_{k_i}} -\frac{1}{\ell^{(i)}_{k_i+1}} \right)\log (r(1-r))^{\frac{k_j+1}{2}}Z(\beta,h,k_j+1).
		\end{aligned}
	\end{equation*}
	Then replace $k_j$ by $k$, we obtain			
	\begin{equation*}
		\begin{aligned}			
			F_r(\beta)&=C\sum_{k=1}^{\infty} \left( \frac{1}{\ell^{(j)}_k} -\frac{1}{\ell^{(j)}_{k+1}} \right)  \log (r(1-r))^{\frac{k+1}{2}}Z(\beta,h,k+1)\\
			&= \frac{C+C\gamma(S^{(j)})}{2}\log (r(1-r))+C\log|v^T\cdot e_+|^2 +C\gamma(S^{(j)})\log\Lambda_++\mathcal{G}(\beta),
		\end{aligned}
	\end{equation*}
	where $C=\prod_{i=1}^{k} \left(1-\frac{1}{p_{i1}\cdots p_{id}}\right)  \prod_{1\leq i\neq j \leq d}\gamma(S^{(i)}) $ and
	\begin{equation*}
		\mathcal{G}(\beta)=C\sum_{k=1}^{\infty}\left( \frac{1}{\ell^{(j)}_k} -\frac{1}{\ell^{(j)}_{k+1}} \right)\log\left(1+\left( \frac{2\cosh(h)}{|v^T\cdot e_+|^2}-1 \right)\left(\frac{\Lambda_-}{\Lambda_+} \right)^k\right).
	\end{equation*}
	\item[2.]By a similar argument of Theorem \ref{thm main 1d 2.5multiple} (2), we have $F_r(\beta)$ is differentiable with respect to $\beta \in \mathbb{R}$.
	\item[3.]The proof is a direct consequence of Theorem \ref{theorem main Nd 2.5multiple} (2) and G\"{a}rtner-Ellis Theorem \cite{ban2021LDP}. 
\end{proof}

\begin{corollary}
For $k=1, {\bf p}_1=(p_1\cdots p_d,1,...,1)\in \mathbb{N}^d$ and $j=1$, then $C= \left(1-\frac{1}{p_{1}\cdots p_{d}}\right)$, $S^{(1)}=\langle p_1\cdots p_d\rangle=\{ \ell^{(1)}_n=(p_1\cdots p_d)^{n-1} \}_{n=1}^{\infty}$ and $\gamma(S^{(1)})=\frac{p_1\cdots p_d}{p_1\cdots p_d-1}$. These give the directional energy function associated to the sum $S^{{\bf p}_1}_{N_1\times\cdots \times N_d}$ is
		\begin{equation*}
	\begin{aligned}
	F_r(\beta)
	=\frac{2p_1\cdots p_d-1}{2p_1\cdots p_d}\log (r(1-r))+\frac{p_1\cdots p_d-1}{p_1\cdots p_d}\log|v^T\cdot e_+|^2 +\log\Lambda_++\mathcal{G}(\beta),
	\end{aligned}
	\end{equation*}
where
\begin{equation*}
\begin{aligned}
\mathcal{G}(\beta)=\sum_{\ell=1}^{\infty}\frac{(p_1\cdots p_d-1)^2}{(p_1\cdots p_d)^{\ell+1}}\log\left(1+\left( \frac{2\cosh(h)}{|v^T\cdot e_+|^2}-1 \right)\left(\frac{\Lambda_-}{\Lambda_+} \right)^\ell\right).
\end{aligned}
\end{equation*}	
Which is coincides with the formula in Theorem \ref{thm 1.3}.
\end{corollary}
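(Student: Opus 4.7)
The plan is to derive this corollary as a direct specialization of Theorem~\ref{theorem main Nd 2.5multiple} to the parameters $k=1$, ${\bf p}_1=(p_1\cdots p_d,1,\ldots,1)$, $j=1$. The argument splits into three steps: first I would evaluate the prefactor $C$ and the semigroup data $S^{(1)}$, $\gamma(S^{(1)})$ from their definitions under this choice of parameters; then substitute them into the formula for $F_r(\beta)$ supplied by Theorem~\ref{theorem main Nd 2.5multiple}; and finally compare coefficient-by-coefficient with Theorem~\ref{thm 1.3}.

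For the first step, since $k=1$ the product $\prod_{i=1}^{k}\bigl(1-(p_{i1}\cdots p_{id})^{-1}\bigr)$ contains a single factor, namely $1-(p_1\cdots p_d)^{-1}$. For each direction $i\neq 1$, the one-generator semigroup is $S^{(i)}=\langle p_{1i}\rangle=\langle 1\rangle=\{1\}$, which is degenerate with $\gamma(S^{(i)})=1$, so that $\prod_{i\neq 1}\gamma(S^{(i)})=1$ and $C=1-(p_1\cdots p_d)^{-1}$. The semigroup $S^{(1)}=\langle p_1\cdots p_d\rangle$ is the purely geometric sequence $\ell_n^{(1)}=(p_1\cdots p_d)^{n-1}$, so the sum defining $\gamma(S^{(1)})$ is a geometric series equal to $p_1\cdots p_d/(p_1\cdots p_d-1)$.

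The second step is a matter of substituting these values into Theorem~\ref{theorem main Nd 2.5multiple}(1). The crucial identity is
\[
C\,\gamma(S^{(1)})=\frac{p_1\cdots p_d-1}{p_1\cdots p_d}\cdot\frac{p_1\cdots p_d}{p_1\cdots p_d-1}=1,
\]
which reduces the coefficient of $\log\Lambda_+$ to $1$ and converts $(C+C\gamma(S^{(1)}))/2$ into $(2p_1\cdots p_d-1)/(2p_1\cdots p_d)$. The coefficient of $\log|v^T\cdot e_+|^2$ is simply $C=(p_1\cdots p_d-1)/(p_1\cdots p_d)$, and inside $\mathcal{G}(\beta)$ the factor $C\bigl(1/\ell^{(1)}_k-1/\ell^{(1)}_{k+1}\bigr)$ collapses to $(p_1\cdots p_d-1)^2/(p_1\cdots p_d)^{k+1}$ by a direct geometric computation on the ratios. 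These numbers reproduce the formula in Theorem~\ref{thm 1.3} coefficient by coefficient.

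There is no serious technical obstacle; the entire argument is arithmetic simplification. The only point requiring a brief remark is the convention $\gamma(\langle 1\rangle)=1$ attached to the degenerate semigroups appearing in the directions $i\neq 1$, which needs to be adopted so that the prefactor $C$ of Theorem~\ref{theorem main Nd 2.5multiple} specializes verbatim to the value $1-(p_1\cdots p_d)^{-1}$ asserted in the corollary.
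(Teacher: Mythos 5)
Your proposal is correct and follows exactly the route the paper intends: the corollary is a direct substitution of $k=1$, ${\bf p}_1=(p_1\cdots p_d,1,\ldots,1)$, $j=1$ into Theorem~\ref{theorem main Nd 2.5multiple}(1), and your arithmetic for $C$, $\gamma(S^{(1)})$, the coefficient $C\gamma(S^{(1)})=1$, and the collapse of $C\bigl(1/\ell^{(1)}_k-1/\ell^{(1)}_{k+1}\bigr)$ to $(p_1\cdots p_d-1)^2/(p_1\cdots p_d)^{k+1}$ all checks out. Your explicit remark on the convention $\gamma(\langle 1\rangle)=1$ for the degenerate directions is a point the paper leaves implicit, and it is worth stating.
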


\begin{example}
		\begin{figure}
		\includegraphics[scale=0.3]{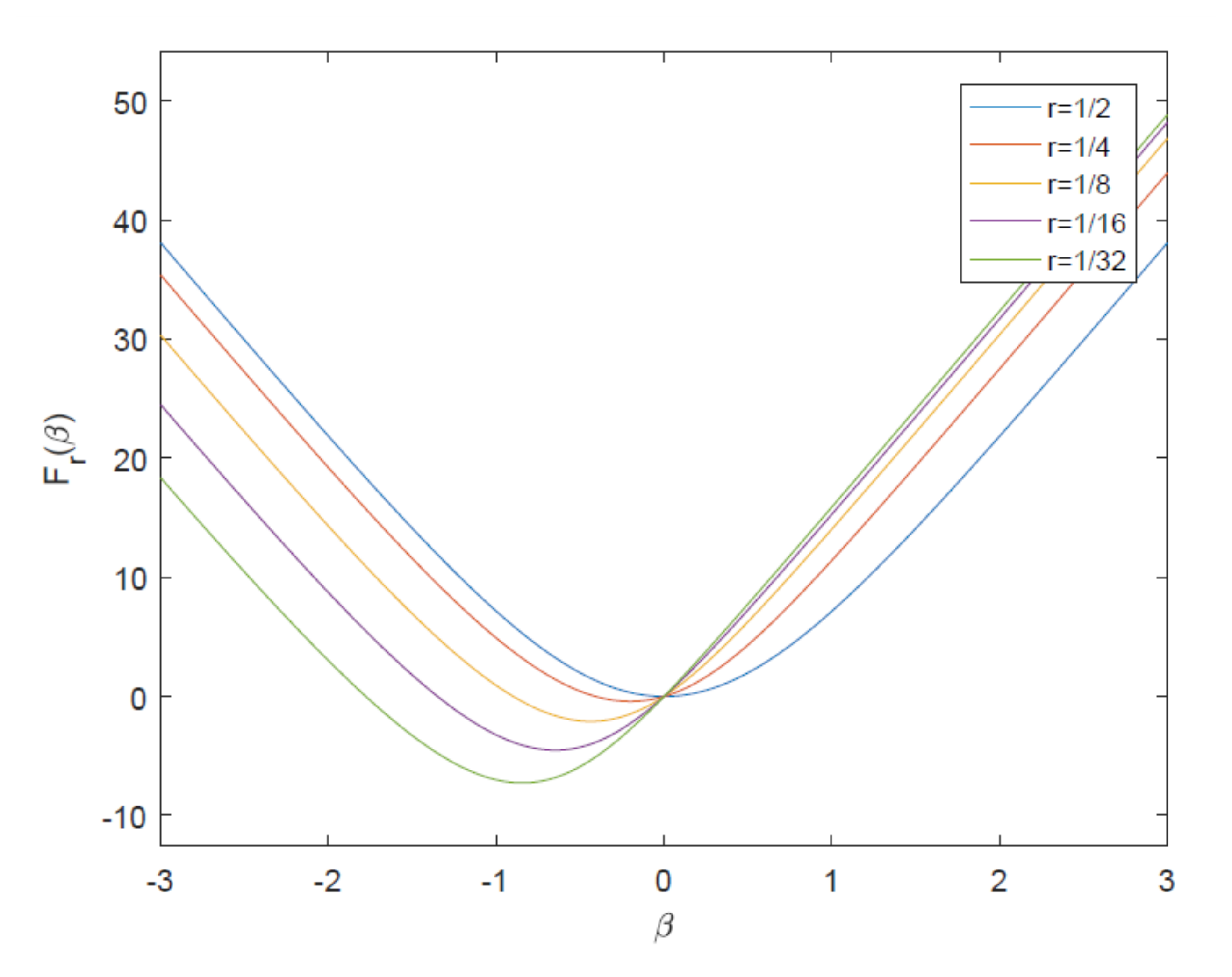}
		\caption{Plot of the $F_r(\beta)$ with ${\bf p}_i, 1\leq i \leq 5$ for different $r$ values.}\label{fig 2}
	\end{figure}
	Figure \ref{fig 2} shows the free energy behaviour for the multidimensional case $d = 2$ with $j=1$, ${\bf p}_1=(2,3), {\bf p}_2=(3,5), {\bf p}_3=(5,7), {\bf p}_4=(7,11), {\bf p}_5=(11,2)$, $S^{(1)}=S^{(2)}=\langle 2,3,5,7,11 \rangle$, $\gamma(S^{(1)})^{-1}=\gamma(S^{(2)})^{-1}=\frac{16}{77}$ and $C=(1-\frac{1}{6})(1-\frac{1}{15})(1-\frac{1}{35})(1-\frac{1}{77})(1-\frac{1}{22})\frac{77}{16}=\frac{2261}{660}$.
\end{example}

\bibliographystyle{amsplain}


\end{document}